\numberwithin{equation}{section}
\newtheorem{Theorem}{Theorem}[section]
\newtheorem{Lemma}[Theorem]{Lemma}
\newtheorem{Remark}[Theorem]{Remark}
\newtheorem{Proposition}[Theorem]{Proposition}
\newtheorem*{Remark*}{Remark}
\newtheorem{Assumption}{Assumption}
\newtheorem{Definition}[Theorem]{Definition}
\newcommand{\cB}{\mathcal B}
\newcommand{\cD}{\mathcal D}
\newcommand{\cL}{\mathcal L}
\newcommand{\cU}{\mathcal U}
\newcommand{\cX}{\mathcal X}
\newcommand{\E}{\mathbb E}
\newcommand{\N}{\mathbb N}
\newcommand{\fP}{\mathbb P}
\newcommand{\R}{\mathbb R}
  \let\div\relax
  \DeclareMathOperator{\div}{div}
\begin{document}

\date{\today}
\renewcommand{\appendixname}{Appendix}

\title{Optimal Control of McKean-Vlasov equations with controlled stochasticity}
\author{Luca Di Persio \and Peter Kuchling}

\maketitle

\abstract{In this article, we analyse the existence of an optimal feedback controller of stochastic optimal control problems governed by SDEs which have the control in the diffusion part. To this end, we consider the underlying Fokker-Planck equation to transform the stochastic optimal control problem into a deterministic problem with open-loop controller.}

\section{Introduction}
The present paper aims at considering the optimal control problem
\begin{equation}\label{oc_sde}
 \text{minimize }\E\Big[\int_0^T g(X(t))+h(u(t,X(t)))dt\Big]+\E g_0(X(T))
\end{equation}
for some functions $g,h$ and $g_0$, subject to either
\begin{equation}\label{linear_sde}
\begin{split}
  dX(t)&=f(X(t))dt+\sqrt{u(t,X(t))}\sigma(X(t))dW(t)
  \\
  X(0)&=x_0
\end{split}
\end{equation}
where $\sigma=((2a_{ij})_{i,j}^d)^\frac{1}{2}$, or by the McKean-Vlasov SDEs
\begin{equation}\label{mckeanvlasov_sde}
\begin{split}
    dX(t)&=b\Bigg(\frac{d\cL_{X(t)}}{dx}(X(t))\Bigg)D(X(t))dt+\Bigg(\frac{2u(t,X(t))\beta\big(\frac{d\cL_{X(t)}}{dx}(X(t))\big)}{\frac{d\cL_{X(t)}}{dx}(X(t))}\Bigg)^{\frac{1}{2}}dW(t)
    \\
    X(0)&=X_0.
\end{split}
\end{equation}
In both situations, the controller $u$ appears in the stochasticity part and is taken from a set
\begin{displaymath}
    \cU=\{u\in L^\infty((0,T)\times\R^d)\colon\gamma_1\leq u\leq\gamma_2, \sum_{i,j=1}^dD_{ij}^2(a_{ij}(x)u(t,x))\leq\gamma_3\text{ a.e. }(t,x)\in(0,T)\times\R^d\}.
\end{displaymath}
where $(a_{ij})$ is an elliptic matrix for all $x\in\R^d$ in the case \eqref{linear_sde}, while it is represented by $a_{ij}=\delta_{ij}$ for \eqref{mckeanvlasov_sde}. The assumptions on all the other coefficients will be later specified. 
Functions $u\in\cU$ are also called quasi-concave.

Let us recall that the above equations have been analysed in various contexts related to mean-field games \cite{MR2295621,MR3305653} as well as statistical Physics as in \cite{FRANK2001455,10.1214/18-ECP150}. In general, the McKean-Vlasov equations have been introduced by the seminal work of McKean \cite{mckean_seminal}. The specific form of marginal dependence of the coefficients is also known as \emph{Nemytskii-type}, which was recently studied in works such as \cite{doi:10.1137/21M145481X,10.1214/23-ECP519}.

The specific shape of the volatility term in \eqref{mckeanvlasov_sde} gives rise to the Nemytskii-type or singular McKean-Vlasov equation. Such equations appear as microscopic descriptions of various physical processes, such as the Bose-Einstein statistic (with $\beta(r)=a\log(1+r)$). Solutions of such equations were recently studied in \cite{BARBU2023109980} and the references therein, as well as \cite{doi:10.1137/17M1162780,barbu_roeck_2021,barbu_roeck_2021_corr} in the context of the corresponding SDE and \cite{barbu_annals, barbu_jfa, trevisan}.

Let us put our optimal control approach into context. Other works, e.g., \cite{automatica,MR4414709,djete2020mckeanvlasov,MR3739204,MR3501391,MR3631380}, rely on the classical approach, i.e., using the Hamilton-Jacobi-Bellman (HJB) equation, see also  \cite{MR3674558,MR0454768}. This approach poses certain technical issues. Indeed, in these cases, the solution of the HJB equation turns out to be only a viscosity solution, which is not differentiable and hence makes it difficult to analyse the dynamic programming equation to obtain the optimal controller. Furthermore, the dynamic programming approach needs the technical notion of differentiability concerning measures using the Wasserstein distance.

Recently, a different path has been considered, mainly based on transferring the stochastic optimal control problem with feedback inputs to a deterministic problem, which yields an open-loop controller serving as the solution to Problem \eqref{oc_sde}. Some successful attempts have been made by employing the corresponding Kolmogorov (backward) equation in \cite{anita_stoch,barbu_reflection,barbu_roeckner_zhang}. Also, in \cite{MR1920272}, a direct approach using BSDEs via the Kolmogorov approach yields the existence of an optimal controller in the drift term. However, the existence of an optimal controller follows if the optimal value function, which is the solution of the corresponding dynamic equation, is sufficiently smooth.

An alternative solution is proposed in this work.
 In particular, we consider the Fokker-Planck equation (or forward Kolmogorov equation) associated with the above SDEs. So, we rewrite the problem as an optimal control problem governed by a deterministic parabolic equation (the Fokker-Planck equation). This approach has been used in \cite{anita_fpe} in case the controller appears in the drift of the SDE. It has also been mentioned in \cite{MR3501391} as a possible alternative to the Hamilton-Jacobi-Bellman approach.

Since we wish to model a controller in the diffusive term, the approach used in \cite{anita_fpe} must be slightly modified. Indeed, since it is generally to be avoided to impose too strong regularity assumptions on the controller, we consider the Fokker-Planck equations corresponding to \eqref{linear_sde} or \eqref{mckeanvlasov_sde} in a variational setting using the triplet $L^2\subset H^{-1}\subset(L^2)^\ast$. In both cases, the first goal is to show the existence of a solution $\rho^u$ to the corresponding Fokker-Planck equation for any controller $u\in\cU$. This is done in Proposition \ref{prop:sol_existence} and Theorem \ref{thm:sol_existence_mv}, respectively. Next, we show that the sequence of controllers minimising the cost functional $I$ has a limit $u^\ast\in\cU$ corresponding to its solution $\rho^{u^\ast}$, our optimal controller. This is proven in Theorem \ref{thm:minimizer_linear} and Theorem \ref{thm:ex_control_nonlin}, respectively. A question that will be answered in future work is the representation of the controller via the maximum principle.

While using $H^{-1}$ as pivot space seems counterintuitive initially, this space appears naturally in studying the regularity of solutions whose initial conditions have finite first and second moments. As explained in \cite{BARBU2023109980}, solutions with $\rho_0\in L^1\cap L^2$ are differentiable in $H^{-1}$, while such a statement fails concerning the semigroup approach on $L^1$.

The rest of the article is structured as follows. In Section \ref{sec:linear}, we show the existence of an optimal controller for the linear problem. Section \ref{sec:nonlinear} is devoted to the analogous results in the case of a nonlinear McKean-Vlasov equation. For the sake of completeness, we included, within the Appendix section, some classical results regarding the existence of solutions to our PDEs due to Lions.

\subsubsection*{Notation}

We denote by $L^p(\R^d)$, $1\leq p\leq \infty$, the Banach spaces of $p$-Lebesgue integrable functions on $\R^d$. Furthermore, $H^1(\R^d)$ denotes the Sobolev space $H^1(\R^d)=\{u\in L^2(\R^d)\colon \nabla u\in L^2(\R^d)\}$, and $H^{-1}(\R^d)$ its dual. Denote the corresponding dual spaces by $L^2_\mathrm{loc}(\R^d)$ etc.. Similarly, we write $W^{1,\infty}(\R^d)=\{u\in L^\infty(\R^d)\colon \nabla u\in L^\infty(\R^d)\}$.

By $\cD(\R^d)=C^\infty_c(\R^d)$, we denote all infinitely differentiable functions with compact support. The symbols $C(\R^d)$ and $C_b(\R^d)$ represent the space of continuous functions and the space of bounded continuous functions, respectively. By $C^1(\R^d)$ we denote the space of all continuously differentiable functions. Partial derivatives in the $i$-th variable are denoted by $D_i$.

For a real Banach space $\cX$ and $0<T<\infty$, we denote by $L^p(0,T;\cX)$ the space of Bochner $p$-integrable functions $u\colon(0,T)\to\cX$ and by $C([0,T];\cX)$ the space of $\cX$-valued continuous functions on $[0,T]$.

\section{Stochastic Optimal Control: The SDEs \eqref{linear_sde}}\label{sec:linear}
\subsection{Setup}
We consider the following optimal control problem:
\begin{equation}\tag{\ref{oc_sde}}
 \text{minimize }\E\Big[\int_0^T g(X(t))+h(u(t,X(t)))dt\Big]+\E g_0(X(T))
\end{equation}
subject to SDEs
\begin{align*}
  dX&=f(X)dt+\sqrt{u}\sigma(X)dW
  \\
  X(0)&=x_0
\end{align*}
and $u=u(t,x)\in\cU$, where $\cU\subset L^\infty((0,T)\times\R^d)$ is a closed convex set to be made precise below. Furthermore, we assume that $f\in L^\infty(\R^d,\R^d)$ while $h \geq 0$ is convex and continuous. We assume also that
\begin{equation}\label{integrability_cost}
    g\in C_b(\R^d)\cap L^2(\R^d), g\geq 0\text{ and }g_0\in C_b(\R^d), g_0\geq 0.
\end{equation}
If we set $\rho:=\rho(t,x)$ to be the density of $X$, i.e. for $A\in\cB(\R^d)$,
\begin{displaymath}
 \fP(X(t)\in A)=\int_A\rho(t,x)dx,\ \forall\ t\geq 0,
\end{displaymath}
then $\rho$ solves (in the sense of distributions) the Fokker-Planck equation (see \cite{barbu_annals, barbu_jfa, trevisan})
\begin{align}\label{fpe}
 \begin{cases}
   \displaystyle\frac{d}{dt}\rho(t,x)-\frac{1}{2}\sum_{i,j=1}^d D_{ij}^2\left[u(t,x)a_{ij}(x)\rho(t,x)\right]+\div(f(x)\rho(t,x))=0,\ (t,x)\in(0,T)\times\R^d
  \\
  \rho(0,x)=\rho_0(x),\ x\in\R^d
 \end{cases}
\end{align}
where $x\in\R^d$, $a_{ij}=\sum_{k=1}^d\sigma_{ik}\sigma_{kj}$. Note that in particular, we consider the situation where $\rho(t)\geq0 $ and $\rho(t)\in L^1(\R^d)$ for all $t$, as the function $\rho$ models a probability distribution. While this assumption is superfluous for the existence of a solution to \eqref{fpe}, we need to assume $\rho_0\geq 0$ and $\rho_0\in L^1(\R^d)$ for the existence of an optimal controller. By the results from e.g. \cite{barbu_annals}, we then obtain $\rho(t)\geq 0$ and $\rho(t)\in L^1(\R^d)$ for all $t$.

Assume that $a_{ij}\in C(\R^d)$ and there exists $\gamma_0>0$ such that
\begin{equation}\label{definiteness}
 \sum_{i,j=1}^da_{ij}\xi_i\xi_j\geq\gamma_0|\xi|^2,\ \forall\xi\in\R^d.
\end{equation}
By taking $\gamma_0>0$, we assume here that the volatility is non-degenerate. In other words, the covariance of the Brownian term is positive definite. It should be possible to generalise the equation by approximation to degenerate coefficients, i.e., a positive semidefinite correlation. This will be the subject of future work.

Denote the solution to \eqref{fpe} by $\rho^u$. Then, we may rewrite the optimal control problem as the following minimisation problem:
\begin{equation}\label{oc_fpe}
 \text{minimize }I(u)=\left[\int_0^T\int_{\R^d}[g(x)+h(u(s,x))]\rho^u(s,x)dsdx+\int_{\R^d}g_0(x)\rho^u(T,x)dx\right]
\end{equation}
subject to \eqref{fpe} and $u\in\cU$, where
\begin{displaymath}
    \cU=\{u\in L^\infty((0,T)\times\R^d)\colon\gamma_1\leq u\leq\gamma_2, \sum_{i,j=1}^dD_{ij}^2(a_{ij}u(t,x))\leq\gamma_3\}
\end{displaymath}
and $0<\gamma_1<\gamma_2<\infty$.

\begin{Remark}
 In Problem \eqref{oc_sde}, $u=u(t,X(t))$ is a Markov stochastic feedback control, while in \eqref{oc_fpe} it is an open distributed controller.

 The difference to the optimal control problem of \cite{anita_fpe} consists in the fact that while the author applied the control function to the deterministic part, our control problem considers the control function on the stochastic part. This way, the control appears in the second-order derivative instead of the divergence.
\end{Remark}

\subsection{Well-posedness of the state system \eqref{fpe}}\label{chapter_fpe_existence}

In view of Theorem \ref{thm:lions}, we consider \eqref{fpe} in the variational setting w.r.t. the following spaces:
\begin{align*}
 V=L^2(\R^d), H=H^{-1}(\R^d).
\end{align*}
Thus, we have $V^\ast=(L^2)^\ast=(I+A_0)V$, where the dual is taken with respect to the $H^{-1}$-duality functional
\begin{equation}\label{eq:duality_functional}
 {_{V^\ast}}\langle u,v\rangle_{V}=\int_{\R^d}[(I+A_0)^{-1} u]vdx=((I+A_0)^{-1}u,v)_{L^2(\R^d)}=(u,v)_{H^{-1}(\R^d)}
\end{equation}
for $u\in V^\ast, v\in V$, where $A_0$ is the operator
\begin{displaymath}
 A_0\rho=-\sum_{i,j=1}^dD_i(a_{ij}D_j\rho)\ \forall\rho\in V
\end{displaymath}
considered in the sense of distributions on $\R^d$. We note that by \eqref{definiteness}, it follows that $(I+A_0)$ is an isomorphism of $H^1$ onto $H^{-1}$ and thus, expression \eqref{eq:duality_functional} is also used to define a scalar product on $H^{-1}(\R^d)$.
We have
\begin{displaymath}
 V=L^2(\R^d)\subset H=H^{-1}(\R^d)\subset V^\ast=(L^2(\R^d))^\ast,
\end{displaymath}
with dense and continuous embeddings. The operator $A(t)\colon V\to V^\ast$ is defined for $\rho\in L^2(\R^d)$ as
\begin{displaymath}
 A(t)\rho=-\frac{1}{2}\sum_{i,j=1}^dD_{ij}^2(u(t,x)a_{ij}(x)\rho)+\div(f(x)\rho)\text{ in }\cD^\prime(\R^d).
\end{displaymath}
Therefore, the bilinear form in question is defined as follows:
\begin{align*}
 a(t;\rho,\psi)&={_{V^\ast}}\langle A\rho,\psi\rangle_{V}
 \\
 &=-\frac{1}{2}\sum_{i,j=1}^d\int_{\R^d}(I+A_0)^{-1}D_{ij}^2(ua_{ij}\rho)\psi dx+\int_{\R^d}(I+A_0)^{-1}\div f\rho\cdot\psi dx,\ \rho,\psi\in V.
\end{align*}
We have:
\begin{Proposition}\label{prop:sol_existence}
 Assume that there exist constants $0<\gamma_1<\gamma_2$ and $\gamma_0>0$ such that $\gamma_1\leq u\leq\gamma_2$ and \eqref{definiteness}. Furthermore, assume that $a_{ij}\in W^{1,\infty}(\R^d)$, that is,
 \begin{displaymath}
  \overline{a}:=\max\{\max_{i,j}\|D_ia_{ij}\|_\infty,\|a\|_\infty\}<\infty.
 \end{displaymath}
 Let $f\in L^\infty(\R^d,\R^d)$ and $\rho_0\in H=H^{-1}$. Then there exists a unique $H^{-1}$-weak solution $\rho$ of \eqref{fpe}, that is,
 \begin{displaymath}
  \rho\in L^2(0,T;V)\cap C([0,T];H)\text{ and }\frac{d}{dt}\rho\in L^2(0,T;V^\ast).
 \end{displaymath}
\end{Proposition}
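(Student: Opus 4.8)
The plan is to verify the hypotheses of the abstract existence theorem of Lions (Theorem~\ref{thm:lions} in the Appendix) for the triple $V=L^2(\R^d)\subset H=H^{-1}(\R^d)\subset V^\ast$ and the time-dependent operator $A(t)$ defined above. Concretely, one must check that $t\mapsto a(t;\rho,\psi)$ is measurable for fixed $\rho,\psi\in V$, and that there are constants $C_1,C_2>0$ and $C_3\in\R$ such that
\begin{align*}
|a(t;\rho,\psi)|&\leq C_1\|\rho\|_V\|\psi\|_V,\\
a(t;\rho,\rho)&\geq C_2\|\rho\|_V^2 - C_3\|\rho\|_H^2,
\end{align*}
uniformly in $t$ and in $u\in\cU$. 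Once these are established, Lions' theorem yields a unique $\rho\in L^2(0,T;V)\cap C([0,T];H)$ with $\tfrac{d}{dt}\rho\in L^2(0,T;V^\ast)$ solving $\tfrac{d}{dt}\rho + A(t)\rho=0$, $\rho(0)=\rho_0$, which is exactly the asserted $H^{-1}$-weak solution of \eqref{fpe}.

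The key computation is the coercivity estimate, and the main point is that working in the $H^{-1}$-duality makes the leading term essentially positive. Writing $(\cdot,\cdot)_{-1}$ for the $H^{-1}$ inner product, one has for $\rho\in L^2$
\begin{displaymath}
{_{V^\ast}}\langle A(t)\rho,\rho\rangle_V = -\tfrac12\sum_{i,j}\bigl((I+A_0)^{-1}D_{ij}^2(u a_{ij}\rho),\rho\bigr)_{L^2} + \bigl((I+A_0)^{-1}\div(f\rho),\rho\bigr)_{L^2}.
\end{displaymath}
For the second-order part I would integrate by parts, using that $(I+A_0)^{-1}$ commutes suitably with derivatives and that $A_0$ is an isomorphism $H^1\to H^{-1}$, to rewrite $-\tfrac12\sum_{ij}\bigl((I+A_0)^{-1}D_{ij}^2(ua_{ij}\rho),\rho\bigr)_{L^2}$ in a form where the dominant contribution is $\tfrac12\int_{\R^d} u\sum_{ij}a_{ij}(x)(\text{something})\,dx$ controlled below by $\tfrac{\gamma_0\gamma_1}{2}\|\rho\|_{L^2}^2$ via \eqref{definiteness} and $u\geq\gamma_1$, modulo lower-order terms. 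The derivatives $D_i a_{ij}$ produce terms that are at most of the form $\overline a\,\gamma_2\|\rho\|_{L^2}\|\rho\|_{H^{-1}}$, absorbed by Young's inequality into $\tfrac{\gamma_0\gamma_1}{4}\|\rho\|_{L^2}^2 + C\|\rho\|_{H^{-1}}^2$; here the quasi-concavity bound $\sum_{ij}D_{ij}^2(a_{ij}u)\leq\gamma_3$ (together with $u\leq\gamma_2$) is what keeps the remaining zeroth-order term bounded. The drift term $\bigl((I+A_0)^{-1}\div(f\rho),\rho\bigr)_{L^2}$ is handled by $\|f\|_\infty$ and the fact that $(I+A_0)^{-1}\div$ maps $L^2$ boundedly into $H^{-1}$ (it is like a first-order operator composed with a gain of two derivatives), giving a bound $\|f\|_\infty\|\rho\|_{L^2}\|\rho\|_{H^{-1}}$, again absorbed by Young. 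Continuity (boundedness) of $a(t;\cdot,\cdot)$ is the same chain of estimates without needing the lower bound, using $u\leq\gamma_2$ and $\|a\|_\infty<\infty$. Measurability in $t$ follows since $u\in L^\infty((0,T)\times\R^d)$ and the operators involved depend on $u(t,\cdot)$ linearly.

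The main obstacle I anticipate is making the integration-by-parts manipulation for the second-order term rigorous at the level of the resolvent $(I+A_0)^{-1}$: one wants to move the operator $(I+A_0)^{-1}$ and the derivatives $D_{ij}^2$ around so that the pairing against $\rho$ displays the coercive quantity explicitly, and this requires care because $\rho$ is only in $L^2$ and $u a_{ij}$ is merely $L^\infty$ (so $u a_{ij}\rho\in L^2$ but its distributional derivatives are not functions). The clean way is to test the equation, i.e., to write everything via the identity ${_{V^\ast}}\langle v,w\rangle_V=(v,w)_{H^{-1}}$ and evaluate on $H^1$ elements first, then pass to the limit; alternatively one introduces $\varphi:=(I+A_0)^{-1}\rho\in H^1$ and expresses the bilinear form through $\varphi$, which turns the second-order term into $\tfrac12\int u\sum_{ij}a_{ij}D_i(\cdot)D_j(\cdot)$-type integrals after integrating by parts twice, where the coercivity is transparent. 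I would also need to confirm that the zeroth-order term generated by the two derivatives hitting $a_{ij}u$ is genuinely bounded (not just bounded in $H^{-1}$-dual norm); this is precisely where the definition of $\cU$ via $\sum_{ij}D_{ij}^2(a_{ij}u)\leq\gamma_3$, i.e. quasi-concavity, enters, providing a one-sided $L^\infty$-type control that, combined with $\rho\mapsto(I+A_0)^{-1}\rho$ gaining regularity, yields the estimate. Everything else (uniqueness, the regularity $\tfrac{d}{dt}\rho\in L^2(0,T;V^\ast)$, continuity into $H$) is then immediate from Lions' theorem.
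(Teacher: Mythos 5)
Your overall architecture coincides with the paper's: apply Lions' theorem (Theorem \ref{thm:lions}) on the triple $L^2\subset H^{-1}\subset (L^2)^\ast$, verify boundedness and the G\aa rding-type coercivity of $a(t;\cdot,\cdot)$ first for $\rho,\psi\in\cD(\R^d)$ and extend by density. But the crux of the computation --- how the second-order term is handled without ever differentiating $u$ --- is left unresolved in your sketch and is partially misdirected. The paper's key device is the identity of Lemma \ref{lemma_2.3},
\[
\sum_{i,j}a_{ij}D_{ij}^2(I+A_0)^{-1}\psi=-\psi+(I+A_0)^{-1}\psi-\sum_{i,j}(D_ia_{ij})D_j(I+A_0)^{-1}\psi,
\]
which works precisely because the pivot operator $A_0=-\sum_{i,j} D_i(a_{ij}D_j\cdot)$ is built from the \emph{same} coefficients $a_{ij}$: after moving both derivatives off $ua_{ij}\rho$ onto $(I+A_0)^{-1}\psi$ by distributional integration by parts, the combination $\sum a_{ij}D_{ij}^2$ reproduces $-\psi$ exactly, so the leading term of $a(t;\rho,\rho)$ is $\tfrac12\int u\rho^2\,dx\geq\tfrac{\gamma_1}{2}\|\rho\|_V^2$. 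No factor $\gamma_0$ appears there (the ellipticity \eqref{definiteness} is used only to make $A_0\colon H^1\to H^{-1}$ an isomorphism), and no derivative ever falls on $u$, so only $\gamma_1\le u\le\gamma_2$ and $\overline a<\infty$ are needed. Your version, in which the dominant contribution is ``$\tfrac12\int u\sum a_{ij}(\text{something})$'' bounded below by $\tfrac{\gamma_0\gamma_1}{2}\|\rho\|_{L^2}^2$, leaves the ``something'' --- i.e.\ the actual cancellation mechanism --- unspecified, and that is exactly the step Lemma \ref{lemma_2.3} supplies.

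The concrete gap is your reliance on the quasi-concavity bound $\sum_{i,j}D_{ij}^2(a_{ij}u)\le\gamma_3$ to ``keep the remaining zeroth-order term bounded.'' First, since $u$ is merely $L^\infty$, $D_{ij}^2(a_{ij}u)$ is only a distribution, so such a zeroth-order term is not a function to begin with. Second, and decisively, the constraint is one-sided: it controls $\int w\,\sum_{i,j}D_{ij}^2(a_{ij}u)\,dx$ only when $w\ge0$ pointwise --- which is exactly how it is used later in Lemma \ref{lemma:priori_bound}, where $w=\rho_\varepsilon^2$. In the coercivity estimate the quantity that would multiply $D_{ij}^2(a_{ij}u)$ is of the type $\rho\cdot(I+A_0)^{-1}\rho$, which is not sign-definite, so the one-sided bound yields no estimate at all. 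The paper explicitly remarks that Proposition \ref{prop:sol_existence} holds \emph{without} the second-derivative condition on $u$; that condition enters only in the $H^1$ a priori bound. Hence, if your integration by parts genuinely produces a $D_{ij}^2(a_{ij}u)$ term, the argument cannot be closed as described; the fix is to arrange, as in Lemma \ref{lemma_2.3}, that all derivatives land on $(I+A_0)^{-1}\psi$ and none on $ua_{ij}$.
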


Taking into account Theorem \ref{thm:lions}, we must check the following two conditions:
\begin{itemize}
    \item $|a(t,\rho,\varphi)|\leq M\|\rho\|_V\|\varphi\|_V$
    \item $a(t,\rho,\rho)\geq\alpha\|\rho\|_V^2-C\|\rho\|_H^2$
\end{itemize}
for $\rho,\varphi\in\cD(\R^d)$. Since $\cD(\R^d)\subset V$ is dense, there exists a unique continuous extension of $a(t;\cdot,\cdot)\colon\cD(\R^d)\times\cD(\R^d)\to\R$ which is given by our operator. The next subsections are devoted to each of the estimates.

\subsubsection{Upper bound}\label{sec:upper bound}

This section is devoted to Condition 2 of Theorem \ref{thm:lions}. By density, it suffices to show the upper bound for $\rho,\varphi\in \cD(\R^d)$. To this end, we need to rewrite the differential operator in a more suitable form.

\begin{Lemma}\label{lemma_2.3}
 For all test functions $\psi\in\cD(\R^d)$, we have
 \begin{displaymath}
  \sum_{i,j=1}^da_{ij}D_{ij}^2(I+A_0)^{-1}\psi=-\psi+(I+A_0)^{-1}\psi-\sum_{i,j=1}^d(D_ia_{ij})D_j(I+A_0)^{-1}\psi.
 \end{displaymath}
\end{Lemma}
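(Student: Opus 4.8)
The plan is to verify the identity by applying the operator $(I+A_0)$ to both sides and checking the resulting equality of distributions, which is cleaner than manipulating $(I+A_0)^{-1}$ directly. Set $\varphi := (I+A_0)^{-1}\psi \in H^1(\R^d)$, so that $(I+A_0)\varphi = \psi$, i.e. $\varphi - \sum_{i,j} D_i(a_{ij}D_j\varphi) = \psi$. The claimed identity then reads
\begin{displaymath}
 \sum_{i,j=1}^d a_{ij}D_{ij}^2\varphi = -\psi + \varphi - \sum_{i,j=1}^d (D_i a_{ij})D_j\varphi.
\end{displaymath}
First I would observe that, since $a_{ij}\in W^{1,\infty}$, the product rule gives $A_0\varphi = -\sum_{i,j} D_i(a_{ij}D_j\varphi) = -\sum_{i,j} a_{ij}D_{ij}^2\varphi - \sum_{i,j}(D_i a_{ij})D_j\varphi$ in the sense of distributions. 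Substituting $A_0\varphi = \psi - \varphi$ yields exactly $\sum_{i,j} a_{ij}D_{ij}^2\varphi = \varphi - \psi - \sum_{i,j}(D_i a_{ij})D_j\varphi$, which is the desired relation after recalling $\varphi = (I+A_0)^{-1}\psi$.

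The main technical point to be careful about is the regularity needed to justify the product rule $D_i(a_{ij}D_j\varphi) = a_{ij}D_{ij}^2\varphi + (D_i a_{ij})D_j\varphi$ as an identity of distributions: here $a_{ij}\in W^{1,\infty}$ has an $L^\infty$ gradient, and $D_j\varphi\in L^2$ with $D_{ij}^2\varphi$ a priori only a distribution, so one should pair against a test function $\eta\in\cD(\R^d)$ and integrate by parts, using that $a_{ij}\eta\in W^{1,\infty}$ with compact support to move one derivative off $D_j\varphi$. This is a standard Leibniz-rule-for-Sobolev-functions argument; no genuine obstacle arises because everything is tested against $\cD(\R^d)$ and the elliptic isomorphism $A_0\colon H^1\to H^{-1}$ from \eqref{definiteness} guarantees $\varphi\in H^1$. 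I would remark that the point of the lemma is precisely to trade the ill-behaved term $\sum_{i,j}(I+A_0)^{-1}D_{ij}^2(u a_{ij}\rho)$ appearing in $a(t;\rho,\psi)$ for lower-order terms plus a benign $-\psi$, which is what makes the coercivity estimate of Section~\ref{sec:upper bound} tractable.
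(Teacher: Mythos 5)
Your proposal is correct and is in substance the same argument as the paper's: both rest on the Leibniz identity $D_i(a_{ij}D_j\varphi)=a_{ij}D_{ij}^2\varphi+(D_ia_{ij})D_j\varphi$ for $\varphi=(I+A_0)^{-1}\psi$ combined with $A_0(I+A_0)^{-1}\psi=\psi-(I+A_0)^{-1}\psi$; the paper merely phrases it as an add-and-subtract computation rather than by applying $(I+A_0)$ to both sides. Your extra remark on justifying the product rule distributionally for $a_{ij}\in W^{1,\infty}$ and $\varphi\in H^1$ is a welcome precision the paper leaves implicit.
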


\begin{proof}
 We have by the product rule (denote $D_{ij}^2=D_iD_j=D_jD_i$)
 \begin{align*}
  \sum_{i,j=1}^da_{ij}D_{ij}^2(I+A_0)^{-1}\psi&=\sum_{i,j=1}^da_{ij}D_{ij}^2(I+A_0)^{-1}\psi+\sum_{i,j=1}^d(D_ia_{ij})D_j(I+A_0)^{-1}\psi
  \\
  &\hspace{20pt}-\sum_{i,j=1}^d(D_ia_{ij})D_j(I+A_0)^{-1}\psi
  \\
  &=\sum_{i,j=1}^dD_i[a_{ij}D_j(I+A_0)^{-1}\psi]-\sum_{i,j=1}^d(D_ia_{ij})D_j(I+A_0)^{-1}\psi
  \\
  &=-A_0(I+A_0)^{-1}\psi-\sum_{i,j=1}^d(D_ia_{ij})D_j(I+A_0)^{-1}\psi
  \\
  &=-(I+A_0)^{-1}\psi-A_0(I+A_0)^{-1}\psi+(I+A_0)^{-1}\psi
  \\
  &\hspace{20pt}-\sum_{i,j=1}^d(D_ia_{ij})D_j(I+A_0)^{-1}\psi
  \\
  &=-\psi+(I+A_0)^{-1}\psi-\sum_{i,j=1}^d(D_ia_{ij})D_j(I+A_0)^{-1}\psi
 \end{align*}
\end{proof}

Note that with this approach, we circumvent the regularity assumption on $u$, as the lemma can easily see.

Let now $\rho,\psi\in\cD(\R^d)\subset V$. Using Lemma \ref{lemma_2.3}, we may proceed in the estimation required for Theorem \ref{thm:lions}:
\begin{align*}
 a(t;\rho,\psi)&={_{V^\ast}}\langle A\rho,\psi\rangle_{V}
 \\
 &=-\frac{1}{2}\sum_{i,j=1}^d\int_{\R^d}(I+A_0)^{-1}D_{ij}^2(ua_{ij}\rho)\psi dx+\int_{\R^d}(I+A_0)^{-1}\div(f\rho)\psi dx
 \\
 &=-\frac{1}{2}\sum_{i,j=1}^d\int_{\R^d}D_{ij}^2(ua_{ij}\rho)(I+A_0)^{-1}\psi dx+\int_{\R^d}(I+A_0)^{-1}\div(f\rho)\psi dx
 \\
 &=-\frac{1}{2}\sum_{i,j=1}^d\int_{\R^d}u\rho a_{ij}D_{ij}^2(I+A_0)^{-1}\psi dx+\int_{\R^d}(I+A_0)^{-1}\div(f\rho)\psi dx
 \\
 &=\frac{1}{2}\int_{\R^d}u\rho\psi dx-\frac{1}{2}\int_{\R^d}u\rho(I+A_0)^{-1}\psi dx
 \\
 &\hspace{20pt}+\frac{1}{2}\int_{\R^d}u\rho\sum_{i,j=1}^d(D_ia_{ij})D_j(I+A_0)^{-1}\psi dx+\int_{\R^d}(I+A_0)^{-1}\div(f\rho)\psi dx
\end{align*}
We may estimate the four terms separately (Recall that $V=L^2(\R^d)$). Note that
\begin{itemize}
    \item Differential operators $D_i\colon H^1(\R^d)\to L^2(\R^d)$ are continuous with $\|D_i\|\leq 1$.
    \item The mapping $(I+A_0)^{-1}\colon(L^2)^\ast\to L^2$ is continuous. Denote its operator norm by $\|(I+A_0)^{-1}\|=\alpha$.
    \item Also, $(I+A_0)^{-1}\colon H^{-1}\to H^1$, i.e. for elements from $H^{-1}$, we have higher regularity.
\end{itemize}
Using these facts, we obtain
\begin{enumerate}
 \item
 \begin{align*}
  \Big|\int_{\R^d}u\rho\psi dx\Big|&\leq\gamma_2\int_{\R^d}|\rho\psi|dx\leq\gamma_2\|\rho\|_V\|\psi\|_V
 \end{align*}
 \item 
 \begin{align*}
  \Big|\int_{\R^d}u\rho(I+A_0)^{-1}\psi dx\Big|\leq\gamma_2\alpha\int_{\R^d}|\rho\psi|dx\leq\gamma_2\alpha\|\rho\|_V\|\psi\|_V
 \end{align*}
 \item
 \begin{align*}
  \Big|\int_{\R^d}u\rho\sum_{i,j=1}^d(D_ia_{ij})D_j(I+A_0)^{-1}\psi dx\Big|&\leq d\overline{a}\gamma_2\int_{\R^d}|\rho\sum_{j=1}^dD_j(I+A_0)^{-1}\psi|dx
  \\
  &\leq d^2\overline{a}\gamma_2\alpha\|\rho\|_V\|\psi\|_V
 \end{align*}
 \item
 \begin{align*}
  \Big|\int_{\R^d}(I+A_0)^{-1}\div(f\rho)\psi dx\Big|&\leq\int_{\R^d}|f\rho\nabla(I+A_0)^{-1}\psi|dx
  \\
  &\leq\|f\|_\infty\int_{\R^d}|\rho\sum_{j=1}^dD_j(I+A_0)^{-1}\psi|dx
  \\
  &\leq\|f\|_\infty\alpha d\|\rho\|_V\|\psi\|_V
 \end{align*}
\end{enumerate}
In total, we have
\begin{displaymath}
 |a(t,\rho,\psi)|\leq\Big(\frac{\gamma_2}{2}+\frac{\gamma_2\alpha}{2}+\frac{d^2\overline{a}\gamma_2\alpha}{2}+\|f\|_\infty d\alpha\Big)\|\rho\|_V\|\psi\|_V
\end{displaymath}
which is the desired upper bound. Using that $\cD(\R^d)\subset V$ is dense, we obtain the upper bound for all $\rho,\varphi\in V$.

\subsubsection{Lower bound}

Let $\rho\in\cD(\R^d)$. Again using Lemma \ref{lemma_2.3}, we obtain the following:
\begin{align*}
 a(t;\rho,\rho)&=-\frac{1}{2}\sum_{i,j=1}^d\int_{\R^d}(I+A_0)^{-1}D_{ij}^2(ua_{ij}\rho)\rho dx+\int_{\R^d}(I+A_0)^{-1}\div(f\rho)\rho dx
 \\
 &=-\frac{1}{2}\sum_{i,j=1}^d\int_{\R^d}ua_{ij}\rho D_{ij}^2(I+A_0)^{-1}\rho dx+\int_{\R^d}(I+A_0)^{-1}\div(f\rho)\rho dx
 \\
 &=\frac{1}{2}\int_{\R^d}u\rho^2 dx-\frac{1}{2}\int_{\R^d}u\rho(I+A_0)^{-1}\rho dx
 \\
 &\hspace{20pt}+\frac{1}{2}\sum_{i,j=1}^d\int_{\R^d}u\rho(D_ia_{ij})D_j(I+A_0)^{-1}\rho dx+\int_{\R^d}(I+A_0)^{-1}\div(f\rho)\rho dx
 \\
 &\geq\frac{\gamma_1}{2}\|\rho\|_V^2-\frac{\gamma_2}{2}\|\rho\|_H^2
 \\
 &\hspace{20pt}+\underbrace{\frac{1}{2}\sum_{i,j=1}^d\int_{\R^d}u\rho(D_ia_{ij})D_j(I+A_0)^{-1}\rho dx+\int_{\R^d}(I+A_0)^{-1}\div(f\rho)\rho dx}_{(\ast)}
\end{align*}

For the remaining term $(\ast)$, we proceed in a similar fashion as \cite{anita_fpe}. Consider the two terms separately:
\begin{enumerate}
    \item 
    \begin{align*}
        \Big|\int_{\R^d}u\rho(D_ia_{ij})D_j(I+A_0)^{-1}\rho dx\Big|&\leq\int_{\R^d}|u\rho(D_ia_{ij})D_j(I+A_0)^{-1}\rho|dx
        \\
        &\leq\gamma_2\overline{a}\int_{\R^d}|\rho D_j(I+A_0)^{-1}\rho|dx
        \\
        &\leq\gamma_2\overline{a}\|\rho\|_{L^2}\|(I+A_0)^{-1}\rho\|_{H^1}
        \\
        &\leq\gamma_2\overline{a}\alpha\|\rho\|_{V}\|\rho\|_H
    \end{align*}
    \item
    \begin{align*}
        \Big|\int_{\R^d}(I+A_0)^{-1}\div(f\rho)\rho dx\Big|&\leq\|(I+A_0)^{-1}\div(f\rho)\|_{H^1}\|\rho\|_{H^{-1}}
        \\
        &\leq\alpha\|f\rho\|_{L^2}\|\rho\|_{H^{-1}}\leq\alpha\|f\|_\infty\|\rho\|_{L^2}\|\rho\|_{H^{-1}}
    \end{align*}
\end{enumerate}
Hence, in total, we have
\begin{displaymath}
 |(\ast)|\leq\underbrace{\Big(\frac{d^2\gamma_2\overline{a}\alpha}{2}+\alpha\|f\|_\infty\Big)}_{=:C_2}\|\rho\|_V\|\rho\|_H\leq\frac{C_2\beta}{2}\|\rho\|_V^2+\frac{C_2}{2\beta}\|\rho\|_H^2
\end{displaymath}
where the last inequality is an application of Young's inequality, which is valid for any $\beta>0$. Turning this inequality around, we obtain
\begin{displaymath}
 (\ast)\geq-\frac{C_2\beta}{2}\|\rho\|_V^2-\frac{C_2}{2\beta}\|\rho\|_H^2
\end{displaymath}
Putting everything together, we are left with
\begin{displaymath}
 a(t;\rho,\rho)\geq\frac{\gamma_1-C_2\beta}{2}\|\rho\|_V^2-\Big(\frac{\gamma_2}{2}+\frac{C_2}{2\beta}\Big)\|\rho\|_H^2.
\end{displaymath}
Choosing $\beta$ small enough such that $\gamma_1-C_2\beta>0$, we obtain the desired estimate for Theorem \ref{thm:lions} for $\rho\in\cD(\R^d)$. Using the density of $\cD(\R^d)\subset V$, the bound also holds for all $\rho\in V$.

\begin{Remark}
The existence result of Proposition \ref{prop:sol_existence} can be shown without the additional requirement on the second derivatives of $u$.
\end{Remark}

\subsection{Existence of an optimal controller}\label{sec_4}

The goal is to show the existence of a minimizer for Problem \eqref{oc_fpe}, i.e.,
\begin{displaymath}
 \text{minimize }I(u)=\left[\int_0^T\int_{\R^d}(g(x)+h(u(s,x))\rho^u(s,x)dsdx+\int_{\R^d}g_0(x)\rho^u(T,x)dx\right]
\end{displaymath}
subject to
\begin{displaymath}
 u\in\cU=\{u\in L^\infty((0,T)\times\R^d)\colon 0<\gamma_1\leq u(t,x)\leq\gamma_2, \sum_{i,j=1}^dD_{ij}^2(a_{ij}u(t,x))\leq\gamma_3\}.
\end{displaymath}
Where the estimate on the second derivative holds in the sense of distributions. Note that under the assumptions of Proposition \ref{prop:sol_existence}, the set $\cU$ is closed. This can be seen as follows:

For $u\in L^\infty((0,T)\times\R^d)$, denote by $T_u$ the operator
\begin{displaymath}
    T_u = \sum_{i,j=1}^dD_{ij}^2(a_{ij}u(t,x))
\end{displaymath}
Hence, for $u\in\cU$, the operator $\gamma_3-T_u$ is non-negative. Now, let $\{u_n\}_n\subset\cU$ with $u_n\xrightarrow{n\to\infty}u\in L^\infty((0,T)\times\R^d)$. We need to show that $\gamma_3-T_u$ is positive, i.e., for any $\psi\in L^1((0,T)\times\R^d)$ with $\psi\geq 0$, we have
\begin{equation}\label{eq:proof_u_closed}
    \gamma_3\psi-T_u\psi\geq 0.
\end{equation}
By assumption, we have that \eqref{eq:proof_u_closed} holds for $u$ replaced by $u_n$ for all $n\in\N$. Due to density, it suffices to show \ref{eq:proof_u_closed} for $\psi\in C_c^\infty((0,T)\times\R^d), \psi\geq 0$. For such functions and arbitrary $\varepsilon>0$, there exists $n_0\in\N$ such that for any $n\geq n_0$,
\begin{align*}
    T_u\psi &=\sum_{i,j=1}^da_{ij}u(t,x)D_{ij}^2\psi
    \\
    &=\sum_{i,j=1}^d a_{ij}(u(t,x)-u_n(t,x))D_{ij}^2\psi + T_{u_n}\psi
    \\
    &\leq\sum_{i,j=1}^d a_{ij}(u(t,x)-u_n(t,x))D_{ij}^2\psi + \gamma_3\psi
    \\
    &\leq d\overline{a}\overline{\psi}\varepsilon +\gamma_3\psi,
\end{align*}
where we set $\overline{\psi}=\max\{\max_{i,j}\|D_{ij}^2\psi\|_\infty,\|\psi\|_\infty\}$. Letting $\varepsilon\to 0$, we see that $\cU$ is closed.

We shall assume that \eqref{integrability_cost},\eqref{definiteness} hold and also that $f\in L^\infty(\R^d)$. As the main theorem of this section, we wish to prove the following statement:
\begin{Theorem}\label{thm:minimizer_linear}
 For any $\rho_0\in L^1(\R^d)\cap L^2(\R^d)$ with $\rho_0\geq 0$, there exists a solution $(u,\rho^u)$ to the optimal control problem \eqref{oc_fpe}.
\end{Theorem}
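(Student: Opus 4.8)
The plan is to use the direct method in the calculus of variations. First I would take a minimizing sequence $(u_n)\subset\cU$, i.e. $I(u_n)\to\inf_{u\in\cU}I(u)=:m$; note $m\geq 0$ is finite since $g,h,g_0\geq 0$ and $u\equiv\gamma_1\in\cU$ gives a finite value by Proposition \ref{prop:sol_existence}. For each $n$, let $\rho_n:=\rho^{u_n}$ be the corresponding $H^{-1}$-weak solution furnished by Proposition \ref{prop:sol_existence}. The key point is to pass to the limit in both the state equation and the cost functional.

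The second step is to extract compactness. Since $\gamma_1\leq u_n\leq\gamma_2$ and the quasi-concavity constraint $\sum_{i,j}D_{ij}^2(a_{ij}u_n)\leq\gamma_3$ both pass to weak-$\ast$ limits in $L^\infty((0,T)\times\R^d)$ (the constraint set $\cU$ is convex and closed under weak-$\ast$ convergence, being defined by linear inequalities tested against nonnegative $\cD$-functions), a subsequence satisfies $u_n\overset{\ast}{\rightharpoonup}u^\ast\in\cU$. For the states, I would revisit the a priori estimates underlying Proposition \ref{prop:sol_existence}: the ellipticity/coercivity constants ($\gamma_1$, $\gamma_2$, $\overline a$, $\|f\|_\infty$, $\|\div f\|_\infty$, $\alpha$) are uniform in $u\in\cU$, so by the standard energy estimate from Theorem \ref{thm:lions} the sequence $\rho_n$ is bounded in $L^2(0,T;V)\cap C([0,T];H)$ with $\tfrac{d}{dt}\rho_n$ bounded in $L^2(0,T;V^\ast)$. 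Hence, up to a further subsequence, $\rho_n\rightharpoonup\rho^\ast$ in $L^2(0,T;V)$ and $\tfrac{d}{dt}\rho_n\rightharpoonup\tfrac{d}{dt}\rho^\ast$ in $L^2(0,T;V^\ast)$; by the Aubin–Lions–Simon lemma ($V=L^2\hookrightarrow\hookrightarrow$ locally into $H=H^{-1}$, though since we are on all of $\R^d$ one gets compactness only locally, which is enough to identify limits of products after localization), $\rho_n\to\rho^\ast$ strongly in $L^2(0,T;H_{\mathrm{loc}})$ and, say, $\rho_n(t)\rightharpoonup\rho^\ast(t)$ in $H$ for every $t$, in particular at $t=T$.

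The third and most delicate step is identifying $\rho^\ast=\rho^{u^\ast}$, i.e. passing to the limit in the weak formulation
$\langle\tfrac{d}{dt}\rho_n,\varphi\rangle + a_n(t;\rho_n,\varphi)=0$
where $a_n$ uses $u_n$. The troublesome term is $\tfrac12\int u_n\rho_n\,\psi\,dx$ (and its companions $\tfrac12\int u_n\rho_n(I+A_0)^{-1}\psi\,dx$ and $\tfrac12\int u_n\rho_n\sum(D_ia_{ij})D_j(I+A_0)^{-1}\psi\,dx$ from Lemma \ref{lemma_2.3}): this is a product of a weak-$\ast$ convergent sequence $u_n$ and a merely weakly convergent sequence $\rho_n$, so it does not obviously converge to $\int u^\ast\rho^\ast\psi$. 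This is where the local strong convergence $\rho_n\to\rho^\ast$ in $L^2_t L^2_{x,\mathrm{loc}}$ is essential: for $\psi\in\cD(\R^d)$ fixed (compactly supported), the relevant integrands live on a fixed compact set (for the first term) or decay (using that $(I+A_0)^{-1}\psi\in H^1$ and testing with $\rho_n$ strongly convergent locally plus a tail estimate from the $L^2(0,T;V)$ bound), so $u_n\rho_n\to u^\ast\rho^\ast$ weakly in $L^2$ locally, and pairing against the fixed $L^2$ function $\psi$ (resp. $(I+A_0)^{-1}\psi$, $D_j(I+A_0)^{-1}\psi$) passes to the limit. The linear terms $\div(f\rho_n)$ pass to the limit by weak convergence alone. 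This identifies $\rho^\ast$ as the unique solution $\rho^{u^\ast}$.

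Finally I would show $I(u^\ast)\leq\liminf I(u_n)=m$, which forces $I(u^\ast)=m$ and $(u^\ast,\rho^{u^\ast})$ optimal. The term $\int g_0\rho_n(T)\to\int g_0\rho^\ast(T)$ since $\rho_n(T)\rightharpoonup\rho^\ast(T)$ in $H=H^{-1}$ and — here one needs $g_0\in C(\R^d)\cap L^\infty$ together with, say, a uniform tightness/second-moment bound on $\rho_n(T)$, or one argues $g_0\in H^1_{\mathrm{loc}}$-type duality plus local strong convergence at time $T$ — I would lean on the local strong convergence of $\rho_n$ near $t=T$ combined with a tail bound. For $\int_0^T\int(g+h(u_n))\rho_n$: the part $\int_0^T\int g\,\rho_n\to\int_0^T\int g\,\rho^\ast$ by the same weak-times-(fixed $L^2$) argument ($g\in L^2$); the part $\int_0^T\int h(u_n)\rho_n$ is handled by lower semicontinuity — since $h$ is convex, continuous and $h\geq 0$, and $\rho_n\geq 0$, the functional $(u,\rho)\mapsto\int_0^T\int h(u)\rho$ is weakly-$\ast\times$weakly lower semicontinuous on $\cU\times\{\rho\geq 0\}$ (e.g. via convexity of $(u,w)\mapsto w\,h(u/w)$-type arguments, or more simply: $\rho_n\to\rho^\ast$ strongly locally and $h(u_n)\rightharpoonup^\ast\overline h\geq h(u^\ast)$ by convexity, giving $\liminf\int h(u_n)\rho_n\geq\int\overline h\,\rho^\ast\geq\int h(u^\ast)\rho^\ast$ on compacts, then let the compact exhaust $\R^d$). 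The main obstacle is precisely this combination of weak-$\ast$ and weak limits in the nonlinear products $u_n\rho_n$ together with the noncompactness of $\R^d$; everything hinges on upgrading to local strong convergence of $\rho_n$ via Aubin–Lions and controlling tails uniformly through the energy estimate.
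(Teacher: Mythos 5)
Your overall strategy is the same as the paper's (direct method: minimizing sequence, weak-$\ast$ compactness of $\cU$, Aubin--Lions for the states, splitting the cost and using convexity of $h$ for lower semicontinuity), but there is a genuine gap in the compactness step, and it sits exactly at the point you yourself flag as ``essential''. You derive the a priori bounds for $\rho_n$ from the variational framework of Proposition \ref{prop:sol_existence}, i.e. with $V=L^2$, $H=H^{-1}$: this gives $\rho_n$ bounded in $L^2(0,T;L^2)$ with $\frac{d}{dt}\rho_n$ bounded in $L^2(0,T;(L^2)^\ast)$, and Aubin--Lions then yields strong convergence only in $L^2(0,T;H^{-1}_{\mathrm{loc}})$ --- which is what you correctly state at the end of your second step. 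But in the third and fourth steps you invoke strong convergence of $\rho_n$ in $L^2(0,T;L^2_{\mathrm{loc}})$, which does not follow from these bounds. And you cannot do without it: strong $H^{-1}_{\mathrm{loc}}$ convergence of $\rho_n$ is useless for the products $u_n\rho_n$ and $h(u_n)\rho_n$, since $u_n$ and $h(u_n)$ are merely weak-$\ast$ convergent in $L^\infty$ and multiplication by $L^\infty$ functions does not act on $H^{-1}$; pairing a weak-$\ast$ convergent factor with the other factor requires the other factor to converge strongly in (at least local) $L^2$ or $L^1$.

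The missing ingredient is a uniform-in-$u$ improvement of spatial regularity: for $\rho_0\in L^2$ one must show $\rho^u\in L^2(0,T;H^1(\R^d))$ with a bound independent of $u\in\cU$ (this is the paper's Lemma \ref{lemma:priori_bound}), after which Aubin--Lions in the triple $H^1\subset L^2_{\mathrm{loc}}\subset H^{-1}$ gives precisely the strong $L^2(0,T;L^2_{\mathrm{loc}})$ convergence you need. Proving that estimate is where the constraint $\sum_{i,j}D_{ij}^2(a_{ij}u)\leq\gamma_3$ in the definition of $\cU$ is actually used: testing the equation with $\rho$ and integrating by parts produces, besides the good term $-\gamma_0\gamma_1\int|\nabla\rho|^2$, the term $\frac{1}{2}\int\rho^2\sum_{i,j}D_{ij}^2(ua_{ij})$, which is controlled only thanks to the one-sided bound $\gamma_3$ (and the computation must be done for smooth approximations $u_\varepsilon$ of $u$, then passed to the limit, since $u$ itself has no classical second derivatives). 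In your proposal this constraint appears only in the verification that $\cU$ is weak-$\ast$ closed, which is a sign that the argument as written would go through for controls without the quasi-concavity condition --- and it should not. Your hedged treatment of the terminal term $\int g_0\rho_n(T)$ is a legitimate concern but a minor one compared to this; once the $L^\infty(0,T;L^2)$ bound of Lemma \ref{lemma:priori_bound} is available, $\rho_n(T)$ converges weakly in $L^2$ and the term can be handled along the lines you sketch.
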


Due to the assumptions on $g,g_0$ and $h$, there exists $m^\ast\in\R$ s.t.
\begin{displaymath}
 \inf_{u\in\cU}I(u)=m^\ast.
\end{displaymath}
Furthermore, we find a sequence $\{u_k\}_{k\in\N}\subset\cU$ such that
 \begin{equation}\label{sequence_k}
  m^\ast\leq I(u_k)\leq m^\ast+\frac{1}{k}\text{ for all }k\in\N.
 \end{equation}

To pass to the limit in \eqref{sequence_k}, we shall first prove some preliminary results given in lemmas which follow.
\begin{Lemma}\label{lemma:priori_bound}
In addition to the assumptions of Proposition \ref{prop:sol_existence}, assume that $\rho_0\in L^2(\R^d)$ and $u\in\cU$. Then the solution $\rho=\rho^u$ to \eqref{fpe} satisfies
\begin{equation}\label{eq:lemma_claim_1}
 \rho\in L^2(0,T;H^1(\R^d))
\end{equation}
\begin{equation}\label{eq:lemma_claim_2}
 \|\rho\|_{L^\infty(0,T;L^2(\R^d))}^2+\int_0^t\int_{\R^d}|\nabla_x\rho(s,x)|^2 ds dx\leq C\|\rho_0\|_{L^2(\R^d)}^2\ \forall t\in(0,T),
\end{equation}
where $C$ is independent of $u$.
\end{Lemma}
\begin{proof}
 We approximate $u$ in $L^\infty((0,T)\times\R^d)$ by a sequence $\{u_\varepsilon\}_\varepsilon\subset L^\infty(0,T;C^2(\R^d))$ with
 \begin{displaymath}
     \sum_{i,j}D_{ij}^2(a_{ij}u_\varepsilon(t,x))\leq\gamma_3\ \forall (t,x)\in(0,T)\times\R^d
 \end{displaymath}
 and $u_\varepsilon\to u$ weak* in $L^\infty((0,T)\times\R^d)$. For $u_\varepsilon\in L^\infty(0,T;C^2(\R^d))$, we can apply as above Thm. \ref{thm:lions} on the spaces
 \begin{displaymath}
     V=H^1(\R^d),\ H=L^2(\R^d),\ V^\ast=H^{-1}(\R^d).
 \end{displaymath}
 Then it follows that Equation \eqref{fpe} with $u_\varepsilon$ instead of $u$ has a unique solution 
 \begin{displaymath}
   \rho_\varepsilon\in L^2(0,T;H^1(\R^d)) \cap C([0,T],L^2(\R^d))\text{ with }\frac{d}{dt}\rho_\varepsilon\in L^2(0,T;H^{-1}(\R^d))
 \end{displaymath}
 Moreover, as easily seen, for $\varepsilon\to 0$, we have
 \begin{displaymath}
     \rho_\varepsilon\to\rho\text{ strongly in }L^2(0,T;L^2(\R^d))\cap C([0,T];H^{-1}(\R^d)).
 \end{displaymath}
 Taking into account that
 \begin{displaymath}
     \frac{1}{2}\frac{d}{dt}\|\rho_\varepsilon(t)\|_{L^2}^2={_{H^1}}(\rho_\varepsilon(t),\frac{d\rho_\varepsilon}{dt}(t))_{H^{-1}}\text{ a.e. }t\in(0,T)
 \end{displaymath}
 we get by \eqref{fpe}
 \begin{align*}
   \frac{1}{2}\|\rho_\varepsilon(t)\|_{L^2}^2&+\int_0^t\int_{\R^d}\rho_\varepsilon(s,x)\Big(-\frac{1}{2}\sum_{i,j=1}^dD_{ij}^2(u(s,x)\rho_\varepsilon(s,x)a_{ij}(x))
   \\
   &\hspace{120pt}+\div(f(x)\rho_\varepsilon(s,x))\Big)dsdx=\frac{1}{2}\|\rho_0\|_{L^2(\R^d)}^2.
 \end{align*}
 On the other hand,
 \begin{align*}
   \int_{\R^d}\rho_\varepsilon D_{ij}^2(u\rho_\varepsilon a_{ij})dx&=-\int_{\R^d}ua_{ij}D_i\rho_\varepsilon D_j\rho_\varepsilon dx-\int_{\R^d}\rho_\varepsilon D_i\rho_\varepsilon D_j(a_{ij}u)dx
 \end{align*}
 and by \eqref{definiteness},
 \begin{align*}
     \sum_{i,j=1}^d\int_{\R^d}\rho_\varepsilon D_{ij}^2(u\rho_\varepsilon a_{ij})dx&=-\int_{\R^d}u\sum_{i,j=1}^da_{ij}(D_i\rho_\varepsilon)(D_j\rho_\varepsilon)dx-\sum_{i,j=1}^d\int_{\R^d}\rho_\varepsilon D_i\rho_\varepsilon D_j(ua_{ij})dx
     \\
     &\leq-\int_{\R^d}u\gamma_0|\nabla\rho_\varepsilon|^2dx-\frac{1}{2}\sum_{i,j=1}^d\int_{\R^d} D_i(\rho_\varepsilon^2) D_j(ua_{ij})dx
     \\
     &\leq-\gamma_1\gamma_0\int_{\R^d}|\nabla\rho_\varepsilon|^2dx+\frac{1}{2}\sum_{i,j=1}^d\int_{\R^d}\rho_\varepsilon^2 D_{ij}^2(ua_{ij})dx
     \\
     &\leq-\gamma_1\gamma_0\int_{\R^d}|\nabla\rho_\varepsilon|^2dx+\frac{\gamma_3}{2}\int_{\R^d}\rho_\varepsilon^2dx.
 \end{align*}
 We have also for any $\delta>0$,
 \begin{align*}
    -\int_{\R^d}\rho_\varepsilon\div(f\rho_\varepsilon)dx&=\int_{\R^d}\nabla\rho_\varepsilon\cdot f\rho_\varepsilon dx
    \\
    &\leq\frac{\delta}{2}\int_{\R^d}|\nabla\rho_\varepsilon|^2dx+\frac{1}{2\delta}\int_{\R^d}|f|^2\rho_\varepsilon^2 dx
    \\
    &\leq\frac{\delta}{2}\int_{\R^d}|\nabla\rho_\varepsilon|^2dx+\frac{\|f\|_\infty^2}{2\delta}\int_{\R^d}\rho_\varepsilon^2 dx\,,
 \end{align*}
 therefore, above calculations together with Young's inequality, give us
 \begin{align*}
   \frac{1}{2}\|\rho_\varepsilon(t)\|_{L^2}^2&=\frac{1}{2}\int_0^t\int_{\R^d}\rho_\varepsilon(s,x)\sum_{i,j=1}^dD_{ij}^2(u(s,x)\rho_\varepsilon(s,x)a_{ij}(x))dsdx
   \\
   &\hspace{20pt}-\int_0^t\int_{\R^d}\rho_\varepsilon(s,x)\div(f(x)\rho_\varepsilon(s,x))dxds+\frac{1}{2}\|\rho_0\|_{L^2}^2
   \\
   &\leq-\gamma_0\gamma_1\int_0^t\int_{\R^d}|\nabla\rho_\varepsilon|^2dxds+\frac{\gamma_3}{2}\int_0^t\int_{\R^d}\rho_\varepsilon^2dxds
   \\
   &\hspace{20pt}+\frac{\delta}{2}\int_0^t\int_{\R^d}|\nabla\rho_\varepsilon|^2dxds+\frac{\|f\|_\infty}{2\delta}\int_0^t\int_{\R^d}\rho_\varepsilon^2dxds+\frac{1}{2}\|\rho_0\|_{L^2}^2
   \\
   &=\Big(\frac{\delta}{2}-\gamma_0\gamma_1\Big)\int_0^t\int_{\R^d}|\nabla\rho_\varepsilon|^2dxds
   \\
   &\hspace{20pt}+\Big(\frac{\gamma_3}{2}+\frac{\|f\|_\infty}{2\delta}\Big)\int_0^t\|\rho_\varepsilon(s)\|_{L^2}^2ds+\frac{1}{2}\|\rho_0\|_{L^2}^2.
 \end{align*}
Let us choose $\delta>0$ s.t. $\delta-2\gamma_0\gamma_1<0$, then rearranging:
 \begin{align*}
   \|\rho_\varepsilon(t)\|_{L^2}^2+(2\gamma_0\gamma_1-\delta)\int_0^t\int_{\R^d}|\nabla\rho_\varepsilon|^2dxds\leq\|\rho_0\|_{L^2}^2+\Big(\gamma_3+\frac{\|f\|_\infty^2}{\delta}\Big)\int_0^t\|\rho_\varepsilon(s)\|_{L^2}^2ds
 \end{align*}
 and exploiting Gronwall's lemma, we have:
 \begin{displaymath}
     \|\rho_\varepsilon(t)\|_{L^2}^2+(2\gamma_0\gamma_1-\delta)\int_0^t\int_{\R^d}|\nabla\rho_\varepsilon(s,x)|^2dsdx\leq C\|\rho_0\|_{L^2}^2\ \forall t\in(0,T),
 \end{displaymath}
 where $C$ is independent of both $\varepsilon$ and $u$. Letting $\varepsilon\to 0$, we get \eqref{eq:lemma_claim_1}. Using the weak lower semicontinuity of the $L^2$- and $H^1$-norms, we also obtain \eqref{eq:lemma_claim_2} as claimed.
\end{proof}

\begin{Lemma}\label{lemma:subsequence_u}
There exists a subsequence $\{u_{k_r}\}_{r\in\N}$ and $u^\ast\in\cU$ such that for any $R>0$:
 \begin{equation}\label{local_weak_convergence}
  u_{k_r}\xrightarrow{r\to\infty}u^\ast\text{ weakly in }L^2((0,T)\times B_R(0),\R)\text{ and weak* in }L^\infty((0,T)\times\R^d).
 \end{equation}
\end{Lemma}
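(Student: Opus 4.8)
The plan is to obtain $u^\ast$ as a weak-$\ast$ limit of a subsequence of $\{u_k\}$ in $L^\infty$, to upgrade this to local weak $L^2$ convergence, and finally to check that all three constraints defining $\cU$ survive passage to the limit.

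First I would use that, by the very definition of $\cU$, every $u_k$ satisfies $\gamma_1\le u_k\le\gamma_2$ a.e., so $\{u_k\}_{k\in\N}$ is bounded in $L^\infty((0,T)\times\R^d)$. Since Lebesgue measure on $(0,T)\times\R^d$ is $\sigma$-finite, $L^1((0,T)\times\R^d)$ is separable, hence the closed balls of $L^\infty((0,T)\times\R^d)=\big(L^1((0,T)\times\R^d)\big)^\ast$ are weak-$\ast$ compact (Banach--Alaoglu) and weak-$\ast$ metrizable, so they are weak-$\ast$ sequentially compact. Extracting a subsequence I get $u_{k_r}\to u^\ast$ weak-$\ast$ in $L^\infty((0,T)\times\R^d)$ for some $u^\ast\in L^\infty$. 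To pass to local weak $L^2$ convergence, fix $R>0$ and $\psi\in L^2((0,T)\times B_R(0))$; since $(0,T)\times B_R(0)$ has finite measure, $\psi$ extended by zero lies in $L^1((0,T)\times\R^d)$, and testing the weak-$\ast$ convergence against it gives $\int u_{k_r}\psi\to\int u^\ast\psi$, which is precisely \eqref{local_weak_convergence}.

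It remains to verify $u^\ast\in\cU$. For the pointwise bounds: given $0\le\phi\in L^1((0,T)\times\R^d)$ one has $\int(u^\ast-\gamma_1)\phi=\lim_r\int(u_{k_r}-\gamma_1)\phi\ge0$ and likewise $\int(\gamma_2-u^\ast)\phi\ge0$, so $\gamma_1\le u^\ast\le\gamma_2$ a.e. For the quasi-concavity constraint I would use its distributional form: $\sum_{i,j}D_{ij}^2(a_{ij}u_{k_r})\le\gamma_3$ means that for every $0\le\varphi\in\cD(\R^d)$ and a.e.\ $t$ one has $\sum_{i,j}\int_{\R^d}a_{ij}(x)u_{k_r}(t,x)D_{ij}^2\varphi(x)\,dx\le\gamma_3\int_{\R^d}\varphi$; multiplying by $0\le\eta\in\cD((0,T))$ and integrating,
\[
 \sum_{i,j=1}^d\int_0^T\!\!\int_{\R^d}a_{ij}(x)\,\eta(t)\,D_{ij}^2\varphi(x)\,u_{k_r}(t,x)\,dx\,dt\;\le\;\gamma_3\Big(\int_0^T\eta\Big)\Big(\int_{\R^d}\varphi\Big).
\]
Since $a_{ij}\in W^{1,\infty}(\R^d)$ is bounded and $\eta D_{ij}^2\varphi$ has compact support, $(t,x)\mapsto a_{ij}(x)\eta(t)D_{ij}^2\varphi(x)$ lies in $L^2((0,T)\times B_R(0))$ for $R$ large, so \eqref{local_weak_convergence} lets me pass to the limit $r\to\infty$ and obtain the same inequality with $u^\ast$ in place of $u_{k_r}$. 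Running $\eta$ over a countable dense family of nonnegative test functions, the inequality $\int_0^T\eta(t)\,\Lambda_\varphi(t)\,dt\le0$, with $\Lambda_\varphi(t):=\sum_{i,j}\int_{\R^d}a_{ij}u^\ast(t,\cdot)D_{ij}^2\varphi-\gamma_3\int\varphi$ a bounded function of $t$, forces $\Lambda_\varphi\le0$ a.e.; intersecting the corresponding null sets over a countable dense family of nonnegative $\varphi\in\cD(\R^d)$ and using continuity of $\varphi\mapsto\Lambda_\varphi(t)$ on $\cD(\R^d)$, I conclude $\sum_{i,j}D_{ij}^2(a_{ij}u^\ast(t,\cdot))\le\gamma_3$ for a.e.\ $t$, i.e.\ $u^\ast\in\cU$.

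The compactness extraction and the pointwise bounds are routine; the only delicate point is the last step, where the constraint — a distributional inequality in $x$ valid for a.e.\ $t$ — must be re-expressed as an inequality tested jointly in $(t,x)$ so that weak $L^2_{\mathrm{loc}}$ convergence applies, and then the ``for a.e.\ $t$'' conclusion recovered through the countable-dense-family argument. I expect this bookkeeping, rather than any genuine analytic difficulty, to be the main obstacle; note that no convergence of the cost functional is needed at this stage.
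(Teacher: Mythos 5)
Your proposal is correct, and it is in fact tighter than the argument given in the paper. The paper fixes $R>0$, invokes Banach--Alaoglu and Eberlein--Shmulyan on $L^2((0,T)\times B_R(0))$ to extract a weakly convergent subsequence, and then disposes of the membership $u^\ast\in\cU$ with ``since $\cU$ is closed, $u^\ast\in\cU$ is clear'' and the remark that the second-derivative constraint holds ``a posteriori.'' You instead perform the extraction once in the weak-$\ast$ topology of $L^\infty((0,T)\times\R^d)$ (using separability of the predual $L^1$ for sequential compactness) and then deduce the local weak $L^2$ convergence by observing that any $\psi\in L^2((0,T)\times B_R(0))$ extends by zero to an element of $L^1$. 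This buys you two things the paper glosses over: a single subsequence that works simultaneously for all $R>0$ (the paper's order of quantifiers would strictly require a diagonal argument), and the weak-$\ast$ $L^\infty$ convergence itself, which appears in the statement of the lemma but is never actually addressed in the paper's four-step proof. Your verification that $u^\ast\in\cU$ is also more honest: norm-closedness plus convexity gives weak closedness (Mazur) but not weak-$\ast$ closedness in $L^\infty$ in general, so one really should check, as you do, that both the pointwise bounds and the distributional inequality $\sum_{i,j}D_{ij}^2(a_{ij}u)\leq\gamma_3$ are stable under the limit by testing against nonnegative $L^1$ functions and products $\eta(t)\varphi(x)$ of test functions, with the countable-dense-family bookkeeping to recover the ``for a.e.\ $t$'' formulation. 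In substance both arguments rest on the same compactness principle, so there is no divergence of ideas, only of rigour.
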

\begin{proof}
Fix $R>0$.
\begin{enumerate}
 \item $\{u_k\}_{k\in\N}$ is weak*-precompact on $L^2((0,T)\times B_R(0),\R)$: By Alaoglu's theorem, any bounded set in a normed space is weak*-precompact. Since $\{u_k\}_{k\in\N}\subset\cU$, we have
 \begin{displaymath}
  \sup_{k\in\N}\|u_k\|_{L^2((0,T)\times B_R(0))}\leq \sqrt{T\lambda(B_R(0))}\gamma_2
 \end{displaymath}
 and hence, the sequence is weak*-precompact.
 \item $\{u_k\}_{k\in\N}$ is weakly precompact on $L^2((0,T)\times B_R(0),\R)$: Since the weak and weak*-topology are equivalent on $L^2$, this statement follows directly from the first step.
 \item $\{u_k\}_{k\in\N}$ has a weakly convergent subsequence: As $\{u_k\}_{k\in\N}$ is weakly compact, this statement follows by Eberlein-Shmulyan. Since $\cU$ is closed, $u^\ast\in\cU$ is clear.
 \item A posteriori, the limit $u^\ast$ of the sequence will again have $T_{u^\ast}\leq\gamma_3$.
\end{enumerate}
Thus, the statement is shown.
\end{proof}
\begin{Lemma}\label{convergence_statement}
In addition to the assumptions of Proposition \ref{prop:sol_existence}, assume that $\rho_0\in L^2(\R^d)$. Then there exists a subsequence $\{\tilde{u}_s\}_{s\in\N}\subset\{u_k\}_{k\in\N}$ such that
 \begin{displaymath}
     \tilde{u}_s\rho^{\tilde{u}_s}\to u^\ast\rho^{u^\ast}
     \text{ strongly in }L^2(0,T;H^{-1}_\mathrm{loc}(\R^d))\text{ as }s\to\infty.
 \end{displaymath}
 Furthermore, we have
 \begin{displaymath}
     \rho^{\tilde{u}_s}\to \rho^{u^\ast}
     \text{ weakly in }L^2(0,T;L^2_\mathrm{loc}(\R^d)).
 \end{displaymath}
\end{Lemma}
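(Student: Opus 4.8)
The plan is to derive compactness of the state trajectories $\rho^{u_k}$ from the uniform estimates of Lemma~\ref{lemma:priori_bound}, pass to the limit in the Fokker--Planck equation, and then upgrade the resulting weak convergences to the asserted strong convergence of the product. First I would note that, by Lemma~\ref{lemma:priori_bound}, $\{\rho^{u_k}\}_k$ is bounded in $L^2(0,T;H^1(\R^d))\cap L^\infty(0,T;L^2(\R^d))$ with constants independent of $k$. Reading off $\frac{d}{dt}\rho^{u_k}=\tfrac12\sum_{i,j}D_{ij}^2(a_{ij}u_k\rho^{u_k})-\div(f\rho^{u_k})$ from \eqref{fpe} and using $\|a_{ij}u_k\rho^{u_k}\|_{L^2(0,T;L^2)}\le\overline a\gamma_2\|\rho^{u_k}\|_{L^2(0,T;L^2)}$ and $\|f\rho^{u_k}\|_{L^2(0,T;L^2)}\le\|f\|_\infty\|\rho^{u_k}\|_{L^2(0,T;L^2)}$, I get $\{\frac{d}{dt}\rho^{u_k}\}_k$ bounded in $L^2(0,T;H^{-2}(\R^d))$. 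On every ball $B_R(0)$ one has $H^1(B_R)\hookrightarrow\hookrightarrow L^2(B_R)\hookrightarrow H^{-2}(B_R)$ (first embedding by Rellich--Kondrachov), so the Aubin--Lions--Simon lemma gives precompactness of $\{\rho^{u_k}\}_k$ in $L^2(0,T;L^2(B_R))$. A diagonal extraction over $R\in\N$, refined inside the subsequence of Lemma~\ref{lemma:subsequence_u}, yields $\{\tilde u_s\}_s$ and a limit $\rho^\ast\in L^2(0,T;H^1(\R^d))$ with $\rho^{\tilde u_s}\to\rho^\ast$ strongly in $L^2(0,T;L^2_\mathrm{loc}(\R^d))$ and weakly in $L^2(0,T;H^1(\R^d))$; the second assertion of the lemma follows once we know $\rho^\ast=\rho^{u^\ast}$.

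To identify $\rho^\ast$ I would pass to the limit in the distributional form of \eqref{fpe}: for a test function $\varphi\in C_c^\infty([0,T)\times\R^d)$ every term is linear in a strongly convergent object except $\int_0^T\!\int_{\R^d}\tilde u_s\,a_{ij}\rho^{\tilde u_s}D_{ij}^2\varphi\,dx\,dt$, and there I write the integrand as $\tilde u_s$ tested against $a_{ij}\rho^{\tilde u_s}D_{ij}^2\varphi$, which converges strongly in $L^1((0,T)\times\R^d)$ (compact support of $\varphi$ together with $\rho^{\tilde u_s}\to\rho^\ast$ in $L^2_\mathrm{loc}$), while $\tilde u_s\rightharpoonup u^\ast$ weak$^\ast$ in $L^\infty$; hence the product passes to the limit. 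Thus $\rho^\ast$ is an $H^{-1}$-weak solution of \eqref{fpe} with control $u^\ast$ (it lies in the regularity class of Proposition~\ref{prop:sol_existence}), and the uniqueness there gives $\rho^\ast=\rho^{u^\ast}$. In particular $\rho^{\tilde u_s}\to\rho^{u^\ast}$ even strongly in $L^2(0,T;L^2_\mathrm{loc}(\R^d))$, which is more than the weak convergence claimed.

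For the strong convergence of the product I would split $\tilde u_s\rho^{\tilde u_s}-u^\ast\rho^{u^\ast}=u^\ast(\rho^{\tilde u_s}-\rho^{u^\ast})+(\tilde u_s-u^\ast)\rho^{u^\ast}$. Since $u^\ast\in L^\infty$, the first term tends to $0$ strongly in $L^2(0,T;L^2_\mathrm{loc}(\R^d))$, hence in $L^2(0,T;H^{-1}_\mathrm{loc}(\R^d))$. The second term is bounded in $L^2((0,T)\times B_R)$ and converges weakly to $0$ there (for $\phi\in L^2((0,T)\times B_R)$ one has $\rho^{u^\ast}\phi\in L^1$, against which $\tilde u_s-u^\ast$ converges weak$^\ast$); the compact embedding $L^2(B_R)\hookrightarrow\hookrightarrow H^{-1}(B_R)$ then lets one promote this weak $L^2$-convergence to strong convergence in $L^2(0,T;H^{-1}(B_R))$, using in addition the uniform-in-$(t,s)$ spatial regularity of $\tilde u_s(t,\cdot)$ furnished by the quasi-concavity constraint defining $\cU$ (which makes $\{\tilde u_s(t,\cdot)\}$ bounded in $W^{1,q}_\mathrm{loc}(\R^d)$ for a suitable $q$) together with the fixed $L^2(0,T;H^1)$-regularity of $\rho^{u^\ast}$ and a dominated-convergence argument in time. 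Diagonalizing over $R\in\N$ completes the proof.

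The main obstacle is precisely this last point: passing the weak limit through $\tilde u_s\rho^{\tilde u_s}$ to reach \emph{strong} convergence in the negative-order space $L^2(0,T;H^{-1}_\mathrm{loc})$. The gain of one derivative, $L^2\hookrightarrow\hookrightarrow H^{-1}$ locally, is what makes it at all possible, but since the controls carry no time regularity this Rellich-type compactness must be deployed jointly with the fixed regularity of $\rho^{u^\ast}$ and the structural constraints of $\cU$; orchestrating these so that slicewise compactness in space survives the time integration is the heart of the argument, and everything else (the a priori bounds, the Aubin--Lions step, and the identification of the limit) is routine once those are in place.
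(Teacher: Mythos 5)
Your proposal follows the paper's route almost verbatim up to the last step: uniform bounds from Lemma \ref{lemma:priori_bound}, a bound on $\frac{d}{dt}\rho^{u_k}$ in a negative-order space, Aubin--Lions plus diagonal extraction to get $\rho^{\tilde u_s}\to\rho^\ast$ strongly in $L^2(0,T;L^2_{\mathrm{loc}})$ and weakly in $L^2(0,T;H^1)$, identification $\rho^\ast=\rho^{u^\ast}$ by passing to the limit in the weak formulation (your treatment of the product term $\tilde u_s\,a_{ij}\rho^{\tilde u_s}D^2_{ij}\varphi$ is in fact more explicit than the paper's one-line remark, and your placement of the time derivative in $H^{-2}$ rather than $H^{-1}$ is the more defensible choice given that $u$ has no spatial regularity), and finally the splitting $\tilde u_s\rho^{\tilde u_s}-u^\ast\rho^{u^\ast}=\tilde u_s(\rho^{\tilde u_s}-\rho^\ast)+(\tilde u_s-u^\ast)\rho^\ast$. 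The paper stops there: its proof establishes only that $\tilde u_s\rho^{\tilde u_s}\to u^\ast\rho^{u^\ast}$ \emph{weakly} in $L^2_{\mathrm{loc}}((0,T)\times\R^d)$, which is what is actually invoked later in the proof of Theorem \ref{thm:minimizer_linear}.

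The genuine gap is in your attempt to upgrade the second term $(\tilde u_s-u^\ast)\rho^{u^\ast}$ to \emph{strong} convergence in $L^2(0,T;H^{-1}(B_R))$ --- the very point you yourself flag as the heart of the matter. The compact embedding $L^2(B_R)\hookrightarrow H^{-1}(B_R)$ converts weak to strong convergence only slicewise in $x$ for a \emph{fixed} $t$; it does nothing against oscillations in the time variable, and the set $\cU$ imposes no time regularity whatsoever on the controls. Weak$^\ast$ convergence in $L^\infty((0,T)\times\R^d)$ does not yield weak convergence of the slices $\tilde u_s(t,\cdot)$ for a.e.\ $t$, so the ``slicewise compactness plus dominated convergence in time'' scheme cannot be set up. Concretely, for suitable $a_{ij}$ the controls $u_s(t,x)=c_1+c_2\sin(st)$ belong to $\cU$ and converge weak$^\ast$ to the constant $c_1$, yet
\begin{displaymath}
  \int_0^T\big\|(u_s(t)-c_1)\rho^{u^\ast}(t)\big\|_{H^{-1}(B_R)}^2\,dt
  = c_2^2\int_0^T\sin^2(st)\,\|\rho^{u^\ast}(t)\|_{H^{-1}(B_R)}^2\,dt
  \longrightarrow \tfrac{c_2^2}{2}\int_0^T\|\rho^{u^\ast}(t)\|_{H^{-1}(B_R)}^2\,dt\neq 0,
\end{displaymath}
so this term genuinely fails to converge strongly; the claimed $W^{1,q}_{\mathrm{loc}}$ bound from the quasi-concavity constraint, even if it held, is a spatial bound and cannot repair a temporal obstruction. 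In short: everything up to and including the weak convergence of the product is correct and matches the paper; the additional strong $L^2(0,T;H^{-1}_{\mathrm{loc}})$ convergence cannot be obtained by your argument (and is not obtained by the paper's either). The remedy is to weaken the first assertion of the lemma to weak convergence of $\tilde u_s\rho^{\tilde u_s}$ in $L^2_{\mathrm{loc}}$, which suffices for the application.
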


\begin{proof}
By Lemma \ref{lemma:priori_bound}, we have
\begin{equation}\label{rho_h1_est}
    \|\rho^{u_{k_r}}(t)\|_{L^2(\R^d)}^2+\|\rho^{u_{k_r}}\|_{L^2(0,T;H^1(\R^d))}\leq C
\end{equation}
where $\{u_{k_r}\}_{r\in\N}$ is the subsequence from Lemma \ref{lemma:subsequence_u}. Therefore,
\begin{equation}\label{eq:derivative_bound}
 \Big\|\frac{d\rho^{u_{k_r}}}{dt}(t)\Big\|_{H^{-1}}\leq C_1\|\rho^{u_{k_r}}(t)\|_{H^1}+C_2.
\end{equation}
Moreover, by Estimate \eqref{rho_h1_est}, it follows that $\{\rho^{u_{k_r}}\}$ is weakly compact in $L^2(0,T;H^1)$ and $L^2(0,T;L^2)$, hence by Aubin-Lions theorem (see, e.g., \cite[Thm. 1.3.5]{barbu_analysis}) there exists a subsequence $\{\tilde{u}_s\}_{s\in\N}$ and a function $\rho^\ast$ s.t.
\begin{equation}\label{rho_convergence}
    \rho^{\tilde{u}_s}\to\rho^\ast\text{ weakly in }L^2(0,T;H^1)\text{ and strongly in }L^2(0,T;L_\mathrm{loc}^2).
\end{equation}
Furthermore, due to \eqref{rho_h1_est} together with \eqref{eq:derivative_bound}, we also get
\begin{displaymath}
  \frac{d\rho^{u_{k_r}}}{dt}\to\frac{d\rho^\ast}{dt}\text{ weakly in }L^2(0,T;H^{-1}).
\end{displaymath}
Taking into account that $\tilde{u}_s\to u^\ast$ weak-star in $L^\infty$, it follows by \eqref{rho_convergence} that
\begin{displaymath}
    \tilde{u}_s\rho^{\tilde{u}_s}\to u^\ast\rho^\ast\text{ weakly in }L_\mathrm{loc}^2((0,T)\times\R^d).
\end{displaymath}
Here is the argument: Let $\psi\in L^2_\mathrm{loc}$. Then $\rho^\ast\psi\in L^1_\mathrm{loc}$ and so for any $R>0$,
\begin{align*}
  \Big|\int_{B_R}(\tilde{u}_s\rho^{\tilde{u}_s}-u^\ast\rho^\ast)\psi dx\Big|&=\Big|\int_{B_R}(\tilde{u}_s\rho^{\tilde{u}_s}-\tilde{u}_s\rho^\ast+\tilde{u}_s\rho^\ast-u^\ast\rho^\ast)\psi dx\Big|
  \\
  &\leq\int_{B_R}|\tilde{u}_s||(\rho^{\tilde{u}_s}-\rho^\ast)\psi|dx+\int_{B_R}|\tilde{u}_s-u^\ast|\rho^\ast\psi dx
  \\
  &\leq\gamma_2\int_{B_R}|\rho^{\tilde{u}_s}-\rho^\ast||\psi|dx+\int_{B_R}|\tilde{u}_s-u^\ast|\rho^\ast\psi dx
  \\
  &\xrightarrow{s\to\infty}0.
\end{align*}
It is left to argue that $\rho^\ast=\rho^{u^\ast}$. But this follows by letting $s\to\infty$ in \eqref{fpe} where $u=\tilde{u}_s$ and we see that $\rho^\ast=\rho^{u^\ast}$.
\end{proof}

\begin{proof}[Proof of Theorem \ref{thm:minimizer_linear}]
 Set $L^u(t,x)=g(x)+h(u(t,x))$. Let $R>0$ and $\{k_s\}_{s\in\N}$ the sequence corresponding to $\{\tilde{u}_s\}_{s\in\N}$ as in \eqref{sequence_k}. By a similar calculation to \cite[Page 22]{anita_fpe}, we obtain
 \begin{align*}
     m^\ast+\frac{1}{k_{r_s}}&\geq\underbrace{\int_0^T\int_{B(0,R)}L^{\tilde{u}_s}\rho^{u^\ast}dxdt}_{=:I_1(\tilde{u}_s)}+\underbrace{\int_0^T\int_{B(0,R)}L^{\tilde{u}_s}(\rho^{\tilde{u}_s}-\rho^{u^\ast})dxdt}_{=:I_2(\tilde{u}_s)}
     \\
     &\hspace{20pt}+\langle g_0,\rho^{\tilde{u}_s}(T)\rangle_{V^\ast,V}
 \end{align*}
 The convergence of the terminal expression is clear. Hence, we focus on the integral expressions.
 \begin{enumerate}
     \item [(i)] Since $h$ is convex and bounded below, via Fatou, the function $I_1(u)$ is lower semicontinuous in $L^p(\R^d)$ for every $1\leq p\leq\infty$, so
     \begin{displaymath}
         \liminf_{s\to\infty}I_1(\tilde{u}_s)\geq I_1(u^\ast)
     \end{displaymath}
     \item [(ii)] By Lemma \ref{convergence_statement} together with Assumption \eqref{integrability_cost}, we obtain the desired convergence. 
     \begin{displaymath}
         \lim_{s\to\infty}I_2(\tilde{u}_s)=0.
     \end{displaymath}
 \end{enumerate}
 By taking $R\to\infty$, this means that $u^\ast$ is a minimizer as claimed.
\end{proof}

\section{Optimal Control of McKean-Vlasov-equation: The nonlinear diffusion case}\label{sec:nonlinear}

The goal of this section is to prove the existence of an optimal controller for the McKean-Vlasov-Equation, i.e. solve the minimising problem
\begin{displaymath}
 \text{minimise }\E(J(X,u))=\E\Big[\int_0^T g(X)+h(u(X))dt+g_0(X(T))\Big]
\end{displaymath}
subject to
\begin{equation}\label{control_space}
 u\in\cU:=\{u\in L^\infty(\R^d)\colon 0<\gamma_1\leq u\leq\gamma_2,\Delta u(x)\leq\gamma_3\text{ a.e. }(t,x)\in(0,T)\times\R^d\}
\end{equation}
and
\begin{equation}\tag{\ref{mckeanvlasov_sde}}
\begin{split}
    dX(t)&=b\Bigg(\frac{d\cL_{X(t)}}{dx}(X(t))\Bigg)D(X(t))dt+\Bigg(\frac{2u(t,X(t))\beta\big(\frac{d\cL_{X(t)}}{dx}(X(t))\big)}{\frac{d\cL_{X(t)}}{dx}(X(t))}\Bigg)^{\frac{1}{2}}dW(t)
    \\
    X(0)&=X_0
\end{split}
\end{equation}
where $\cL_{X(t)}$ is the law of $X(t)$ and $\frac{d\cL_{X(t)}}{dx}$ is its density. To stay in the spirit of Section \ref{sec_4}, we focus here on a solution in $H^{-1}$.

The McKean-Vlasov equation \eqref{mckeanvlasov_sde} is equivalent to the Fokker-Planck equation
\begin{equation}\label{mckeanvlasov_fpe}
\begin{split}
    \frac{d}{dt}\rho&-\Delta(u\beta(\rho))+\div(D(x)b(\rho)\rho)=0\text{ in }(0,\infty)\times\R^d
    \\
    \rho(0)&=\rho_0\text{ in }\R^d
\end{split}
\end{equation}
where $\rho_0dx=\cL_{X_0}$. As explained in \cite{barbu_annals}, by using \cite[Theorem 2.5]{trevisan}, the existence of a solution to \eqref{mckeanvlasov_fpe} yields a probability measure on the canonical space $C([0,T];\R^d)$ solving the corresponding martingale problem. This implies the existence of a weak solution to \eqref{mckeanvlasov_sde} using e.g. \cite[Theorem 4.5.2]{stroock_varadhan}. The other direction from \eqref{mckeanvlasov_sde} to \eqref{mckeanvlasov_fpe} can be seen using It\^o's formula.

Hence, the optimal control problem reduces to
\begin{equation}\label{eq:oc_nonlinear}
 \text{minimize }I(u)=\int_0^T\int_{\R^d}(g(x)+h(u(x)))\rho(t,x)dtdx+\int_{\R^d}g_0(x)\rho(T,x)dx
\end{equation}
subject to \eqref{control_space} and \eqref{mckeanvlasov_fpe}.

While it is desirable from a modelling perspective to have solutions in $L^1$, the appearance of the controller within the diffusion term forces us to rely on $H^{-1}$-methods for the variational approach in the construction of a solution and controller.

To show the existence of an optimal controller, we proceed as in Section \ref{sec_4}. Namely, we use a nonlinear version of Lions' theorem to show the existence of an $H^{-1}$-solution to \eqref{mckeanvlasov_fpe} for any $u\in\cU$. Then we consider an approximating sequence $\{u_s\}_{s\in\N}$ and show that its limit $u^\ast$ attains the infimum of the cost functional.

Let us now formulate the standing assumptions for this section.

\begin{Assumption}\label{assumption:nonlinear}
We assume the following conditions on the coefficients:
\begin{enumerate}
    \item $\beta\in C^1(\R)$, $0<\alpha_0\leq\beta^\prime(r)$ for all $r\in\R$ and $\beta(0)=0$.
    \item $b\in C(\R), |b|_\infty<\infty$, $D\in L^\infty$.
    \item The function $\beta$ is Lipschitz continuous with constant $|\beta|_\mathrm{Lip}$.
    \item There exists $\alpha_2\geq 0$ such that for all $x,y\in\R$,
    \begin{displaymath}
     |b(x)x-b(y)y|\leq\alpha_2|\beta(x)-\beta(y)|.
 \end{displaymath}
\end{enumerate}
\end{Assumption}
\begin{Remark}
 Actually, the condition $|b|_\infty<\infty$ is redundant: If everything else is assumed, for any $x\neq 0$,
 \begin{align*}
   |b(x)|&=\frac{|b(x)x|}{|x|}=\frac{|b(x)x-b(0)0|}{|x|}\leq\alpha_2\frac{|\beta(x)-\beta(0)|}{|x|}\leq\alpha_2|\beta|_\mathrm{Lip}\frac{|x|}{|x|}=\alpha_2|\beta|_\mathrm{Lip}.
 \end{align*}
 By continuity, this bound also holds for $x=0$ and we get $|b|_\infty\leq\alpha_2|\beta|_\mathrm{Lip}$.
 
 Furthermore, it follows directly that the function $y\mapsto b(y)y$ is Lipschitz continuous with constant $\alpha_2|\beta|_\mathrm{Lip}$.
\end{Remark}

\subsection{Well-posedness of the state system \eqref{mckeanvlasov_fpe}}

To show existence of an $H^{-1}$-solution to \eqref{mckeanvlasov_fpe}, we wish to apply Theorem \ref{lions_nonlinear}, again in the case of $V=L^2, H=H^{-1}, V^\ast=(L^2)^\ast$. We consider the nonlinear operator $A\colon V\to V^\ast$, defined by
\begin{displaymath}
  {_{V^\ast}}(Ay,z)_V=\int_{\R^d}(I-\Delta)^{-1}(-\Delta (u\beta(y)))z\, dx+\int_{\R^d}(I-\Delta)^{-1}\div(D(b(y)y))z\, dx
\end{displaymath}
for $y,z\in V$. This leads to the following existence result:
\begin{Theorem}\label{thm:sol_existence_mv}
Under Assumption \ref{assumption:nonlinear} For any $\rho_0\in H$, there exists a solution $\rho\colon[0,T]\to V^\ast$ to \eqref{mckeanvlasov_fpe} satisfying
\begin{gather*}
    \rho\in C([0,T];H^{-1})\cap L^2(0,T;L^2),\ \frac{d\rho}{dt}\in L^2(0,T;(L^2)^\ast)
    \\
    \frac{d\rho}{dt}(t)+Ay(t)=0\text{ a.e. }t\in(0,T),\ \rho(0)=\rho_0.
\end{gather*}
\end{Theorem}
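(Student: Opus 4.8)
The plan is to apply the nonlinear Lions-type theorem, Theorem~\ref{lions_nonlinear}, in the Gelfand triple $V=L^2(\R^d)\subset H=H^{-1}(\R^d)\subset V^\ast=(L^2(\R^d))^\ast$, mirroring the proof of Proposition~\ref{prop:sol_existence}. The first step is to rewrite $A$ in a form free of derivatives of $u$. Since here $a_{ij}=\delta_{ij}$, so $I+A_0=I-\Delta$, the constant-coefficient case of Lemma~\ref{lemma_2.3} reads $(I-\Delta)^{-1}(-\Delta\phi)=\phi-(I-\Delta)^{-1}\phi$ for $\phi\in L^2(\R^d)$, and integrating by parts together with the self-adjointness of $(I-\Delta)^{-1}$ turns the divergence term into $-\int_{\R^d}D(x)b(y)y\cdot\nabla(I-\Delta)^{-1}z\,dx$. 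Thus
\begin{displaymath}
  {_{V^\ast}}(Ay,z)_V=\int_{\R^d}u\beta(y)z\,dx-\int_{\R^d}(I-\Delta)^{-1}(u\beta(y))z\,dx-\int_{\R^d}D(x)b(y)y\cdot\nabla(I-\Delta)^{-1}z\,dx.
\end{displaymath}
By Assumption~\ref{assumption:nonlinear}(1),(3), $\beta$ is Lipschitz with $\beta(0)=0$, so $u\beta(y)\in L^2$ for $y\in L^2$; and by the Remark after Assumption~\ref{assumption:nonlinear}, $r\mapsto b(r)r$ is Lipschitz and vanishes at $0$, so $b(y)y\in L^2$. Hence the right-hand side is well defined for $y,z\in V$.

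Next I would verify the structural hypotheses. Boundedness $\|Ay\|_{V^\ast}\le C(1+\|y\|_V)$ — in fact linear growth — follows term by term: the first two terms are bounded by $\gamma_2|\beta|_{\mathrm{Lip}}(1+\|(I-\Delta)^{-1}\|)\|y\|_V\|z\|_V$, and the third by $\|D\|_\infty\alpha_2|\beta|_{\mathrm{Lip}}\|y\|_V\|\nabla(I-\Delta)^{-1}z\|_{L^2}\le C\|y\|_V\|z\|_V$, using that $(I-\Delta)^{-1}\colon(L^2)^\ast\to H^1$ and $\nabla\colon H^1\to L^2$ are bounded. Hemicontinuity is immediate — indeed $A\colon V\to V^\ast$ is Lipschitz continuous — because the Nemytskii maps $y\mapsto\beta(y)$ and $y\mapsto b(y)y$ are Lipschitz on $L^2$ and the remaining operations are bounded and linear.

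The essential point is the coercivity/monotonicity estimate. Since $\beta'\ge\alpha_0$ and $\beta(0)=0$, one has $(\beta(r)-\beta(s))(r-s)\ge\alpha_0(r-s)^2$, so the leading term is strongly monotone: $\int_{\R^d}u(\beta(y_1)-\beta(y_2))(y_1-y_2)\,dx\ge\gamma_1\alpha_0\|y_1-y_2\|_V^2$. The two remaining terms are lower order and must be routed through the smoothing of $(I-\Delta)^{-1}$, exactly as in the lower bound of Proposition~\ref{prop:sol_existence}: pairing $(I-\Delta)^{-1}(u(\beta(y_1)-\beta(y_2)))$ with $y_1-y_2$ in the $H^1$--$H^{-1}$ duality gives a bound $\gamma_2|\beta|_{\mathrm{Lip}}\|y_1-y_2\|_V\|y_1-y_2\|_H$, while Assumption~\ref{assumption:nonlinear}(4), $|b(r)r-b(s)s|\le\alpha_2|\beta(r)-\beta(s)|\le\alpha_2|\beta|_{\mathrm{Lip}}|r-s|$, together with $\|\nabla(I-\Delta)^{-1}(y_1-y_2)\|_{L^2}\le\|y_1-y_2\|_H$, gives $\|D\|_\infty\alpha_2|\beta|_{\mathrm{Lip}}\|y_1-y_2\|_V\|y_1-y_2\|_H$ for the drift term. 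Young's inequality absorbs both into $\varepsilon\|y_1-y_2\|_V^2+C_\varepsilon\|y_1-y_2\|_H^2$, and choosing $\varepsilon$ small yields
\begin{displaymath}
  {_{V^\ast}}(Ay_1-Ay_2,y_1-y_2)_V\ge\tfrac12\gamma_1\alpha_0\|y_1-y_2\|_V^2-C\|y_1-y_2\|_H^2,
\end{displaymath}
so $A$ plus a multiple of the embedding $H\hookrightarrow V^\ast$ is monotone; taking $y_2=0$ (note $A0=0$ since $\beta(0)=0$ and $b(0)\cdot 0=0$) gives the coercivity $\langle Ay,y\rangle\ge\tfrac12\gamma_1\alpha_0\|y\|_V^2-C\|y\|_H^2$.

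With all hypotheses in hand, Theorem~\ref{lions_nonlinear} yields $\rho\in C([0,T];H^{-1})\cap L^2(0,T;L^2)$ with $\tfrac{d\rho}{dt}\in L^2(0,T;(L^2)^\ast)$, $\tfrac{d\rho}{dt}(t)+A\rho(t)=0$ a.e.\ $t$, and $\rho(0)=\rho_0$, which is exactly the $H^{-1}$-weak formulation of \eqref{mckeanvlasov_fpe}. I expect the main obstacle to be precisely this coercivity/monotonicity step: a naive $L^2$--$L^2$ estimate of the two lower-order terms only produces $-C\|y\|_V^2$, which is useless against the good term $\gamma_1\alpha_0\|y\|_V^2$, so the argument must exploit that $(I-\Delta)^{-1}$ maps $H^{-1}$ into $H^1$ and pair in the $H^1$--$H^{-1}$ duality — and this is exactly what forces Assumption~\ref{assumption:nonlinear}(4) linking $b(r)r$ to $\beta$. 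A secondary, routine point is to check that the abstract theorem in the appendix is stated in a form (pseudo-monotone, or $\lambda$-monotone plus hemicontinuous plus coercive plus bounded) that the estimates above fit.
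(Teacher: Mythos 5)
Your proposal is correct and follows essentially the same route as the paper: the same Gelfand triple $V=L^2\subset H=H^{-1}\subset(L^2)^\ast$, the same identity $(I-\Delta)^{-1}(-\Delta)=I-(I-\Delta)^{-1}$ to isolate the strongly monotone term $\int u(\beta(y_1)-\beta(y_2))(y_1-y_2)\,dx\ge\gamma_1\alpha_0\|y_1-y_2\|_V^2$, the same use of Assumption \ref{assumption:nonlinear}(4) to control the drift term, and the same appeal to Theorem \ref{lions_nonlinear}. The only (harmless) divergence is in the lower-order terms, where you pair in the $H^1$--$H^{-1}$ duality and absorb via Young's inequality, whereas the paper bounds them directly by $|y_1-y_2|_{H^{-1}}^2$; your version is if anything the more careful of the two.
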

\begin{proof}
 We show that the conditions of Theorem \ref{lions_nonlinear} for $p=2$ are fulfilled for the operator
 \begin{displaymath}
     A(y)=-\Delta(u\beta(y))+\div(Db(y)y),\ y\in V.
 \end{displaymath}
 We start by showing that $A$ is demicontinuous, i.e., for any sequence $\{y_n\}\subset V=L^2(\R^d)$, with $y_n\to y$ in $V$ and any $z\in V$, we show that
 \begin{displaymath}
     {_{V^\ast}}(A(y_n)-A(y),z)_V\xrightarrow{n\to\infty}0.
 \end{displaymath}
 Hence, let $z\in V$. Integration by parts and Cauchy-Schwarz yields
 \begin{align*}
     |{_{V^\ast}}(Ay_n-Ay,z)_V|&\leq\Big|\int (I-\Delta)^{-1}z(-\Delta(u\beta(y_n)-u\beta(y)))dx\Big|
     \\
     &\hspace{20pt}+\Big|\int (I-\Delta)^{-1}z\div(Db(y_n)y_n-Db(y)y)dx\Big|
     \\
     &\leq\int|\Delta(I-\Delta)^{-1}z||u(\beta(y_n)-\beta(y))|dx
     \\
     &\hspace{20pt}+\int|\nabla(I-\Delta)^{-1}zD(x)(b(y_n)y_n-b(y)y)|dx
     \\
     &\leq\|\Delta(I-\Delta)^{-1}z\|_{L^2}\gamma_2|\beta|_\mathrm{Lip}\|y_n-y\|_{L^2}
     \\
     &\hspace{20pt}+|D|_\infty\|\nabla(I-\Delta)^{-1}z\|_{L^2}\|b(y_n)y_n-b(y)y\|_{L^2}
 \end{align*}
 Since $y\mapsto b(y)y$ is Lipschitz continuous, both terms converge to zero due to the $L^2$-convergence of $\{y_n\}_{n\in\N}$. 
 Hence, the demicontinuity is shown.
 
Let us underline that, for the demicontinuity, the Lipschitz condition on $y\mapsto b(y)y$ is unnecessary. We wish to apply the continuous mapping theorem for the second term, which only holds on finite measure spaces. Note that we have $y_n\to y$ in $L^2$ by assumption, implying that $y_n\to y$ locally in measure; hence, by the continuous mapping theorem, it holds that $b(y_n)\to b(y)$ locally in measure, which also means that $b(y_n)y\to b(y)y$ locally in measure. Since $b(y_n)y\in L^2$ and $|b(y_n)y|\leq|b|_\infty|y|\in L^2$, we obtain by Pratt's theorem (see e.g. \cite[Satz 5.3]{elstrodt}) that
 \begin{displaymath}
     b(y_n)y\to b(y)y\text{ in }L^2.
 \end{displaymath}
 
 \textbf{Quasi-Monotonicity:} We need to show that there exists some $\gamma>0$ such that
 \begin{displaymath}
     {_{V^\ast}}(A(y_1)-A(y_2),y_1-y_2)_V\geq -\gamma|y_1-y_2|_H^2
 \end{displaymath}
 for all $y_1,y_2\in D(A)$. First of all, we have
 \begin{align*}
     {_{V^\ast}}(A(y_1)-A(y_2),y_1-y_2)_V&=\int(I-\Delta)^{-1}(A(y_1)-A(y_2))(y_1-y_2)dx
     \\
     &=\int(I-\Delta)^{-1}(-\Delta(u\beta(y_1))+\Delta(u\beta(y_2))(y_1-y_2)dx
     \\
     &\hspace{10pt}+\int(I-\Delta)^{-1}[\div(Db(y_1)y_1-\div(Db(y_2)y_2](y_1-y_2)dx
     \\
     &=-\int(I-\Delta)^{-1}\Delta[u\beta(y_1)-u\beta(y_2)](y_1-y_2)dx
     \\
     &\hspace{10pt}+\int(I-\Delta)^{-1}[\div(Db(y_1)y_1)-\div(Db(y_2)y_2)](y_1-y_2)dx
     \\
     &=\underbrace{\int u(\beta(y_1)-\beta(y_2))(y_1-y_2)dx}_{\geq 0}
     \\
     &\hspace{10pt}-\int(I-\Delta)^{-1}(u\beta(y_1)-u\beta(y_2))(y_1-y_2)dx
     \\
     &\hspace{10pt}+\int(I-\Delta)^{-1}[\div(Db(y_1)y_1)-\div(Db(y_2)y_2)](y_1-y_2)dx
 \end{align*}
 Since the first term is $\geq 0$, we only need to estimate the other two. For the second term, We have
 \begin{align*}
     -\int&(I-\Delta)^{-1}((u\beta(y_1)-u\beta(y_2))(y_1-y_2)dx
     \\
     &=-\int(u\beta(y_1)-u\beta(y_2))(I-\Delta)^{-1}(y_1-y_2)dx
     \\
     &\geq-\gamma_2\int(\beta(y_1)-\beta(y_2))(I-\Delta)^{-1}(y_1-y_2)dx
     \\
     &\geq-\gamma_2|\beta|_\mathrm{Lip}\int(y_1-y_2)(I-\Delta)^{-1}(y_1-y_2)dx
     \\
     &=-\gamma_2|\beta|_\mathrm{Lip}|y_1-y_2|_{H^{-1}}^2.
 \end{align*}
 The divergence term can be estimated as follows:
 \begin{align*}
     \int&(I-\Delta)^{-1}\div[(Db(y_1)y_1)-(Db(y_2)y_2)](y_1-y_2)dx
     \\
     \\
     &=\|\div\|\big(Db(y_1)y_1-Db(y_2)y_2,(I-\Delta)^{-1}(y_1-y_2)\big)_{L^2}
     \\
     &\leq n\|D\|_\infty\big|\big(b(y_1)y_1-b(y_2)y_2,(I-\Delta)^{-1}(y_1-y_2)\big)_{L^2}\big|
     \\
     &\leq n\|D\|_\infty\alpha_2|\beta|_\mathrm{Lip}|y_1-y_2|_{H^{-1}}^2
 \end{align*}
 Turning the inequality around, we obtain
 \begin{displaymath}
   \int(I-\Delta)^{-1}\div[Db(y_1)y_1-Db(y_2)y_2](y_1-y_2)dx\geq -n\|D\|_\infty\alpha_2|\beta|_\mathrm{Lip}|y_1-y_2|_{H^{-1}}^2.
 \end{displaymath}
 In total, we have
 \begin{displaymath}
   {_{V^\ast}}(A(y_1)-A(y_2),y_1-y_2)_V\geq-|\beta|_\mathrm{Lip}(\gamma_2+n\|D\|_\infty\alpha_2)|y_1-y_2|_H^2.
 \end{displaymath}
 
 \textbf{Inequalities from Theorem \ref{lions_nonlinear}:} It is left to show that the two inequalities stated in the theorem hold.
 \begin{enumerate}
     \item For the first one, we may repeat the calculation for the quasi-monotonicity to obtain
     \begin{displaymath}
       {_{V^\ast}}(Ay,y)_V\geq \alpha_0\gamma_1\|y\|^2_V-\gamma|y|_H^2
     \end{displaymath}
     where $\gamma$ is the constant from the calculation of quasi-monotonicity.
     \item Repeating the calculation used for demicontinuity by replacing $y_n-y$ by $y\in V$, we obtain
     \begin{align*}
         \|Ay\|_\ast&=\sup_{\|z\|_V=1}|(Ay,z)|
         \\
         &\leq\|\Delta(I-\Delta)^{-1}z\|_{L^2}\gamma_2|\beta|_\mathrm{Lip}\|y\|_V
         \\
         &\hspace{20pt}+|D|_\infty\|\nabla(I-\Delta)^{-1}z\|_{L^2}\|b(y)y\|_V
         \\
         &\leq C_2\|y\|_V
     \end{align*}
     for some constant $C_2$, where we used that $\|b(y)y\|_V\leq\alpha_2\|\beta(y)\|_V\leq\alpha_2|\beta|_\mathrm{Lip}\|y\|_V$.
 \end{enumerate}
 Since $u>0$ by assumption, the coercivity condition can be omitted given \eqref{eq:blablatest}. Therefore, all conditions of Theorem \ref{lions_nonlinear} are fulfilled, and we have proven the existence of a solution to \eqref{mckeanvlasov_fpe}.
\end{proof}

\subsection{Existence of an optimal controller}

Now that we have established the existence of a solution to \eqref{mckeanvlasov_fpe}, it remains to show the following theorem:
\begin{Theorem}\label{thm:ex_control_nonlin}
  Under Assumption \ref{assumption:nonlinear}, for any $\rho_0\in L^1(\R^d)\cap L^2(\R^d)$ with $\rho_0\geq 0$, there exists a solution $(u,\rho^u)$ to the optimal control problem \eqref{eq:oc_nonlinear}.
\end{Theorem}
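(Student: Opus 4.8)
The plan is to follow the scheme of Section~\ref{sec_4} almost verbatim: fix a minimizing sequence of controls, extract a weak-$\ast$ limit $u^\ast\in\cU$, use a uniform a~priori estimate and Aubin--Lions to pass to the limit in the nonlinear Fokker--Planck equation \eqref{mckeanvlasov_fpe}, and finally use convexity of $h$ to obtain lower semicontinuity of the cost. Since $g,g_0,h\ge 0$, the infimum $m^\ast:=\inf_{u\in\cU}I(u)$ is finite and there is a sequence $\{u_k\}_{k\in\N}\subset\cU$ with $m^\ast\le I(u_k)\le m^\ast+\tfrac1k$.

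\textbf{Step 1 (uniform a~priori bound).} First I would establish the nonlinear analogue of Lemma~\ref{lemma:priori_bound}: for every $u\in\cU$ and $\rho_0\in L^2(\R^d)$ the solution $\rho=\rho^u$ of \eqref{mckeanvlasov_fpe} lies in $L^2(0,T;H^1(\R^d))$ and
\[
 \|\rho\|_{L^\infty(0,T;L^2(\R^d))}^2+\int_0^T\!\!\int_{\R^d}|\nabla_x\rho(s,x)|^2\,ds\,dx\le C\|\rho_0\|_{L^2(\R^d)}^2
\]
with $C$ independent of $u$. As in Lemma~\ref{lemma:priori_bound} one approximates $u$ by $u_\varepsilon\in C^2$ with $\Delta u_\varepsilon\le\gamma_3$, works with the more regular solution $\rho_\varepsilon$, tests \eqref{mckeanvlasov_fpe} with $\rho_\varepsilon$ in the $L^2$-pairing, and uses
\[
 \int_{\R^d}\nabla\big(u_\varepsilon\beta(\rho_\varepsilon)\big)\cdot\nabla\rho_\varepsilon\,dx=\int_{\R^d}u_\varepsilon\beta'(\rho_\varepsilon)|\nabla\rho_\varepsilon|^2\,dx-\int_{\R^d}\Delta u_\varepsilon\,B(\rho_\varepsilon)\,dx,\qquad B(r):=\int_0^r\beta(s)\,ds .
\]
Here $\beta'\ge\alpha_0>0$ produces the good term $\gamma_1\alpha_0\|\nabla\rho_\varepsilon\|_{L^2}^2$, while $0\le B(r)\le\tfrac12|\beta|_\mathrm{Lip}r^2$ together with $\Delta u_\varepsilon\le\gamma_3$ bounds the second integral by $\tfrac{\gamma_3}{2}|\beta|_\mathrm{Lip}\|\rho_\varepsilon\|_{L^2}^2$; the drift contribution $\int_{\R^d}D\,b(\rho_\varepsilon)\rho_\varepsilon\cdot\nabla\rho_\varepsilon\,dx$ is absorbed by Young's inequality using $\|D\|_\infty<\infty$ and $|b|_\infty<\infty$. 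Gronwall's lemma and $\varepsilon\to0$ (weak lower semicontinuity of the $L^2$- and $H^1$-norms) give the bound. I expect this to be the main obstacle: the quasi-concavity constraint $\Delta u\le\gamma_3$ is precisely what makes the $\nabla u$-term tractable, and without it the energy identity cannot be closed.

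\textbf{Steps 2--3 (limits of controls and states).} Exactly as in Lemma~\ref{lemma:subsequence_u}, $\{u_k\}$ is bounded in $L^\infty((0,T)\times\R^d)$ and in $L^2((0,T)\times B_R)$ for each $R$, so along a subsequence $u_k\to u^\ast$ weak-$\ast$ in $L^\infty$ and weakly in $L^2_\mathrm{loc}$, with $u^\ast\in\cU$ since $\cU$ is convex and closed (the constraint $\Delta u^\ast\le\gamma_3$ passes to the weak limit). By Step~1, $\{\rho^{u_k}\}$ is bounded in $L^2(0,T;H^1)\cap L^\infty(0,T;L^2)$; writing $\tfrac{d}{dt}\rho^{u_k}=\Delta(u_k\beta(\rho^{u_k}))-\div(D\,b(\rho^{u_k})\rho^{u_k})$ and using $\|u_k\beta(\rho^{u_k})\|_{L^2}\le\gamma_2|\beta|_\mathrm{Lip}\|\rho^{u_k}\|_{L^2}$ and $\|D\,b(\rho^{u_k})\rho^{u_k}\|_{L^2}\le\|D\|_\infty\alpha_2|\beta|_\mathrm{Lip}\|\rho^{u_k}\|_{L^2}$, the time derivatives are bounded in $L^2(0,T;H^{-2}_\mathrm{loc})$. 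Applying the Aubin--Lions theorem on each $B_R$ (with $H^1(B_R)$ compactly embedded in $L^2(B_R)\subset H^{-2}(B_R)$, as in the proof of Lemma~\ref{convergence_statement}) and diagonalising over $R$ yields a further subsequence $\{\tilde u_s\}$ with $\rho^{\tilde u_s}\to\rho^\ast$ strongly in $L^2(0,T;L^2_\mathrm{loc})$ and weakly in $L^2(0,T;H^1)$, where $\rho^\ast\ge0$ (each $\rho^{u_k}$ being a probability density). Since $\beta$ and $r\mapsto b(r)r$ are Lipschitz (Assumption~\ref{assumption:nonlinear}(3),(4)), $\beta(\rho^{\tilde u_s})\to\beta(\rho^\ast)$ and $b(\rho^{\tilde u_s})\rho^{\tilde u_s}\to b(\rho^\ast)\rho^\ast$ strongly in $L^2(0,T;L^2_\mathrm{loc})$; combined with $\tilde u_s\to u^\ast$ weak-$\ast$ in $L^\infty$ this gives $\tilde u_s\beta(\rho^{\tilde u_s})\rightharpoonup u^\ast\beta(\rho^\ast)$ weakly in $L^2_\mathrm{loc}$ and $D\,b(\rho^{\tilde u_s})\rho^{\tilde u_s}\to D\,b(\rho^\ast)\rho^\ast$ strongly in $L^2_\mathrm{loc}$. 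Passing to the limit in the distributional form of \eqref{mckeanvlasov_fpe} (with controller $\tilde u_s$) identifies $\rho^\ast$ as a solution of \eqref{mckeanvlasov_fpe} with controller $u^\ast$, which we denote $\rho^{u^\ast}$.

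\textbf{Step 4 (lower semicontinuity and conclusion).} Put $L^u(t,x):=g(x)+h(u(t,x))$; by continuity of $h$ one has $0\le h(\tilde u_s)\le\max_{[\gamma_1,\gamma_2]}h$. For fixed $R$, all integrands being nonnegative,
\[
 m^\ast+\tfrac1{k_{r_s}}\ \ge\ I(\tilde u_s)\ \ge\ \int_0^T\!\!\int_{B_R}L^{\tilde u_s}\rho^{\tilde u_s}\,dx\,dt+\int_{B_R}g_0\,\rho^{\tilde u_s}(T)\,dx .
\]
Splitting $L^{\tilde u_s}\rho^{\tilde u_s}=L^{\tilde u_s}\rho^{u^\ast}+L^{\tilde u_s}(\rho^{\tilde u_s}-\rho^{u^\ast})$, the second part vanishes in the limit by Cauchy--Schwarz, since $\{L^{\tilde u_s}\}$ is bounded in $L^2((0,T)\times B_R)$ by \eqref{integrability_cost} and $\rho^{\tilde u_s}-\rho^{u^\ast}\to0$ in $L^2((0,T)\times B_R)$; and $v\mapsto\int_0^T\!\int_{B_R}h(v)\rho^{u^\ast}\,dx\,dt$ is convex and strongly continuous, hence weakly lower semicontinuous, so that $\liminf_s\int_0^T\!\int_{B_R}L^{\tilde u_s}\rho^{\tilde u_s}\ge\int_0^T\!\int_{B_R}L^{u^\ast}\rho^{u^\ast}$. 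The terminal term converges as in the proof of Theorem~\ref{thm:minimizer_linear}. Letting $s\to\infty$ and then $R\to\infty$ (monotone convergence) gives $m^\ast\ge I(u^\ast)$, and since $u^\ast\in\cU$ also $I(u^\ast)\ge m^\ast$; hence $(u^\ast,\rho^{u^\ast})$ solves \eqref{eq:oc_nonlinear}.
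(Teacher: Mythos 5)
Your proposal follows essentially the same route as the paper: the uniform a priori bound (the paper's Lemma \ref{lemma:priori_nonlin}), extraction of a weak-$\ast$ limit $u^\ast\in\cU$, an Aubin--Lions compactness argument identifying $\rho^\ast=\rho^{u^\ast}$ via the Lipschitz properties of $\beta$ and $r\mapsto b(r)r$, and then the lower-semicontinuity argument of Theorem \ref{thm:minimizer_linear}. The only substantive difference is inside your Step 1, where you treat the $\nabla u_\varepsilon$ term via the primitive $B(r)=\int_0^r\beta(s)\,ds$ and a further integration by parts against $\Delta u_\varepsilon\le\gamma_3$; this is cleaner than the paper's pointwise manipulation of $-\int D_i\rho_\varepsilon(D_iu_\varepsilon)\beta(\rho_\varepsilon)\,dx$ (whose sign bookkeeping is delicate), but it does not change the structure of the proof.
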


\begin{Remark}
    As above, we refer to \cite{barbu_annals} for the nonnegativity and integrability of the solution $\rho$ to \eqref{mckeanvlasov_fpe} under the conditions of Theorem \ref{thm:ex_control_nonlin}.
\end{Remark}
Due to the assumptions on $g,g_0$ and $h$, we again obtain the existence of $m^\ast\in\R$ such that
\begin{displaymath}
  \inf_{u\in\cU}I(u)=m^\ast
\end{displaymath}
as well as a sequence $\{u_k\}\subset\cU$ such that
\begin{displaymath}
  m^\ast\leq I(u_k)\leq m^\ast+\frac{1}{k}\text{ for all }k\in\N.
\end{displaymath}
Lemma \ref{lemma:subsequence_u} is applicable and we get the existence of a subsequence $\{u_{k_r}\}_{r\in\N}$ as well as $u^\ast\in\cU$ such that
\begin{displaymath}
 u_{k_r}\xrightarrow{r\to\infty}u^\ast\text{ weakly in }L^2(B_R(0),\R)\text{ and weak* in }L^\infty(\R^d).
\end{displaymath}

Similarly to Lemma \ref{lemma:priori_bound}, we start out with the following a priori bound on $\rho$:
\begin{Lemma}\label{lemma:priori_nonlin}
Additionally to Assumption \ref{assumption:nonlinear}, also assume that $\rho_0\in L^2(\R^d)$ and $u\in\cU$. Then the solution $\rho=\rho^u$ to Equation \eqref{mckeanvlasov_fpe} satisfies
\begin{equation}\label{eq:l4_claim1}
    \rho\in L^2(0,T;H^1(\R^d))
\end{equation}
\begin{equation}\label{eq:l4_claim2}
 \|\rho\|_{L^\infty(0,T;L^2(\R^d))}^2+\int_0^t\int_{\R^d}|\nabla_x\rho(s,x)|^2 dx ds\leq C\|\rho_0\|_{L^2(\R^d)}^2\ \forall t\in(0,T),
\end{equation}
where $C$ is independent of $u$.
\end{Lemma}
\begin{proof}
Let $\{u_\varepsilon\}_{\varepsilon}\subset C^2([0,T]\times\R^d)\cap\cU$ be a sequence approximating $u$ as $\varepsilon\to 0$. Then \eqref{mckeanvlasov_fpe} yields a solution $\rho_\varepsilon\in L^2$ as in Lemma \ref{lemma:priori_bound}. Furthermore, due to
\begin{displaymath}
  \frac{1}{2}\frac{d}{dt}\|\rho_\varepsilon(t)\|_{L^2}^2={_{H^1}}\big(\rho_\varepsilon(t),\frac{d\rho_\varepsilon}{dt}(t))_{H^{-1}}
\end{displaymath}
we get by \eqref{mckeanvlasov_fpe}
\begin{align*}
    \frac{1}{2}&\|\rho_\varepsilon(t)\|_{L^2}^2+\int_0^t\int_{\R^d}\rho_\varepsilon(s,x)\Big(-\Delta(u_\varepsilon(s,x)\beta(\rho_\varepsilon(s,x))
    \\
    &\hspace{70pt}+\div(D(x)b(\rho_\varepsilon(s,x))\rho_\varepsilon(s,x))\Big)dxds=\frac{1}{2}\|\rho_0\|_{L^2}^2.
\end{align*}
Denoting $\bar{\alpha}:=\max\{\alpha_0,|\beta|_\mathrm{Lip}\}$, we have
\begin{align*}
  \int_{\R^d}\rho_\varepsilon D_{ii}^2(u_\varepsilon\beta(\rho_\varepsilon))dx&=-\int_{\R^d}D_i(\rho_\varepsilon)D_i(u_\varepsilon\beta(\rho_\varepsilon))dx
  \\
  &=-\int_{\R^d}D_i\rho_\varepsilon(D_iu_\varepsilon)\beta(\rho_\varepsilon)dx-\int_{\R^d}(D_i\rho_\varepsilon)\cdot u_\varepsilon\cdot D_i(\beta(\rho_\varepsilon))dx
  \\
  &=-\int_{\R^d}D_i\rho_\varepsilon(D_iu_\varepsilon)\beta(\rho_\varepsilon)dx-\int_{\R^d}(D_i\rho_\varepsilon)\cdot u_\varepsilon\cdot\beta^\prime(\rho_\varepsilon)(D_i\rho_\varepsilon)dx
  \\
  &\leq-\bar\alpha\int_{\R^d}(D_i\rho_\varepsilon)\cdot\rho_\varepsilon\cdot(D_iu_\varepsilon)dx-\alpha_0\gamma_1\int_{\R^d}(D_i\rho_\varepsilon)^2dx
  \\
  &=-\bar\alpha\int_{\R^d}D_i(\rho_\varepsilon
  ^2)\cdot(D_iu_\varepsilon)dx-\alpha_0\gamma_1\int_{\R^d}(D_i\rho_\varepsilon)^2dx
  \\
  &=\bar\alpha\int_{\R^d}\rho_\varepsilon^2 D_{ii}^2 u_\varepsilon dx-\alpha_0\gamma_1\int_{\R^d}(D_i\rho_\varepsilon)^2dx
  \\
  &\leq\bar\alpha\gamma_3\int_{\R^d}\rho_\varepsilon^2dx-\alpha_0\gamma_1\int_{\R^d}(D_i\rho_\varepsilon)^2dx.
\end{align*}
For the divergence term, we get using Young's inequality
\begin{align*}
    -\int_{\R^d}\rho_\varepsilon\div(Db(\rho_\varepsilon)\rho_\varepsilon)dx&=\int_{\R^d}\nabla\rho_\varepsilon\cdot[Db(\rho_\varepsilon)\rho_\varepsilon]dx
    \\
    &\leq\frac{\delta}{2}\int_{\R^d}|\nabla\rho_\varepsilon|^2dx+\frac{1}{2\delta}\int_{\R^d}D^2(b(\rho_\varepsilon)\rho_\varepsilon)^2dx
    \\
    &\leq\frac{\delta}{2}\int_{\R^d}|\nabla\rho_\varepsilon|^2dx+\frac{\|D\|_\infty^2}{2\delta}\int_{\R^d}(b(\rho_\varepsilon)\rho_\varepsilon)^2dx
    \\
    &\leq\frac{\delta}{2}\int_{\R^d}|\nabla\rho_\varepsilon|^2dx+\frac{|b|_\infty^2\|D\|_\infty^2}{2\delta}\int_{\R^d}\rho_\varepsilon^2dx.
\end{align*}
In total, we get
\begin{align*}
    \frac{1}{2}\|\rho_\varepsilon(t)\|_{L^2}^2-\frac{1}{2}\|\rho_0\|_{L^2}^2&=\sum_{i=1}^d\int_0^t\int_{\R^d}\rho_\varepsilon D_{ii}^2(u_\varepsilon\beta(\rho_\varepsilon))dxds-\int_0^t\int_{\R^d}\rho_\varepsilon\div(Db(\rho_\varepsilon)\rho_\varepsilon)dxds
    \\
    &\leq\sum_{i=1}^d\int_0^t\bar\alpha\gamma_3\|\rho_\varepsilon\|_{L^2}^2ds-\sum_{i=1}^d\alpha_0\gamma_1\int_0^t\int_{\R^d}(D_i\rho_\varepsilon)^2dxds
    \\
    &\hspace{20pt}+\frac{\delta}{2}\int_0^t\int_{\R^d}|\nabla\rho_\varepsilon|^2dxds+\frac{|b|_\infty^2\|D\|_\infty^2}{2\delta}\int_0^t\int_{\R^d}\rho_\varepsilon^2dxds
    \\
    &=\bar\alpha\gamma_3d\int_0^t\|\rho_\varepsilon\|_{L^2}^2ds-\alpha_0\gamma_1\int_0^t\|\nabla\rho_\varepsilon\|_{L^2}^2ds
    \\
    &\hspace{20pt}+\frac{\delta}{2}\int_0^t\|\nabla\rho_\varepsilon\|_{L^2}^2ds+\frac{|b|_\infty^2\|D\|_\infty^2}{2\delta}\int_0^t\|\rho_\varepsilon\|_{L^2}^2ds
\end{align*}
Rearranging, this yields
\begin{align*}
    \|\rho_\varepsilon(t)\|_{L^2}^2+(2\alpha_0\gamma_1-\delta)\int_0^t\|\nabla\rho_\varepsilon\|_{L^2}^2ds\leq\|\rho_0\|_{L^2}^2+\big(2\bar\alpha\gamma_3d+\frac{|b|_\infty\|D\|_\infty}{\delta}\big)\int_0^t\|\rho_\varepsilon\|_{L^2}^2ds.
\end{align*}
As in Lemma \ref{lemma:priori_bound}, we proceed by using Gronwall's inequality and send $\varepsilon\to 0$ to obtain \eqref{eq:l4_claim1} and \eqref{eq:l4_claim2} for arbitrary $u\in\cU$.
\end{proof}
Comparing with Section \ref{sec_4}, we can conclude the proof of existence of a controller as follows:
\begin{proof}[Proof of Theorem \ref{thm:ex_control_nonlin}]
In view of Lemma \ref{lemma:priori_nonlin}, the statement of Lemma \ref{convergence_statement} directly carries over to the nonlinear case. Note that due to the Lipschitz properties of $\beta$ and $y\mapsto b(y)y$, it follows from the convergences appearing in Lemma \ref{convergence_statement} together with \eqref{rho_convergence} that $\rho^\ast=\rho^{u^\ast}$. Therefore, the proof of Theorem \ref{thm:ex_control_nonlin} follows the same arguments as the proof of Theorem \ref{thm:minimizer_linear}.
\end{proof}

\newpage
\appendix

\section{The existence theorems}

\subsection{Lions' theorem: Linear case}
The following theorem, see e.g. \cite[Thm 10.9]{brezis}.
\begin{Theorem}\label{thm:lions}
 Let $V,H$ be Hilbert spaces. Assume that $V\subset H$ dense and continuous such that
 \begin{displaymath}
  V\subset H\subset V^*.
 \end{displaymath}
 Let $T>0$ and for almost all $t\in[0,T]$, we are given a bilinear form $a(t;\rho,\varphi)\colon V\times V\to\R$ satisfying the following properties:
 \begin{enumerate}
  \item For all $\rho,\varphi\in V$, the function $t\mapsto a(t;\rho,\varphi)$ is measurable,
  \item There exists $M>0$ such that
  \begin{equation}\label{bounded_form}
   |a(t;\rho,\varphi)|\leq M\|\rho\|_V\|\varphi\|_V
  \end{equation}
  for almost every $t\in[0,T]$ and all $\rho,\varphi\in V$.
  \item There exist constants $\alpha>0$ and $C\in\R$ such that 
  \begin{displaymath}
   a(t;\rho,\rho)\geq \alpha\|\rho\|_V^2-C\|\rho\|_H^2
  \end{displaymath}
  for almost every $t\in[0,T]$ and all $\rho\in V$.
 \end{enumerate}
 Given $\tilde{f}\in L^2(0,T;V^*)$ and $\rho_0\in H$, there exists a unique function $\rho$ satisfying
 \begin{displaymath}
  \rho\in L^2(0,T;V)\cap C([0,T];H)\text{ and }\frac{d}{dt}\rho\in L^2(0,T;V^*)
 \end{displaymath}
 and the differential equation
 \begin{equation}\label{lions_pde}
  \begin{cases}
   \langle \frac{d\rho}{dt}(t),\varphi\rangle+a(t;\rho(t),\varphi)&=\langle \tilde{f}(t),\varphi\rangle\text{ a.e. }t\in(0,T),\ \forall \varphi\in V
   \\
   \rho(0)&=\rho_0.
  \end{cases}
 \end{equation}
\end{Theorem}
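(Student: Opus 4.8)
The plan is to prove this by the classical Faedo--Galerkin method, which also delivers uniqueness via a Gronwall argument. Assume $V$ separable (true in every application of this theorem in the paper, where $V=L^2(\R^d)$ or $H^1(\R^d)$) and fix a sequence $\{e_k\}_{k\in\N}\subset V$ whose linear span is dense in $V$. With $V_n=\operatorname{span}\{e_1,\dots,e_n\}$, seek $\rho_n(t)=\sum_{k=1}^n c^n_k(t)e_k$ solving
\[
  \Big\langle \tfrac{d}{dt}\rho_n(t),e_k\Big\rangle+a(t;\rho_n(t),e_k)=\langle\tilde f(t),e_k\rangle,\qquad k=1,\dots,n,
\]
with $\rho_n(0)$ chosen so that $\rho_n(0)\to\rho_0$ in $H$. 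By Conditions~1--2, after inverting the fixed invertible Gram matrix of $\{e_k\}$ in $H$ this is a linear ODE system with $L^1$-in-time coefficients, so Carath\'eodory's theorem yields a unique absolutely continuous solution on $[0,T]$.

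Next I would test with $\rho_n(t)$ and use $\langle\tfrac{d}{dt}\rho_n,\rho_n\rangle=\tfrac12\tfrac{d}{dt}\|\rho_n\|_H^2$, the coercivity bound (Condition~3), and Young's inequality on $\langle\tilde f,\rho_n\rangle$ to obtain
\[
  \tfrac{d}{dt}\|\rho_n(t)\|_H^2+\alpha\|\rho_n(t)\|_V^2\le 2C\|\rho_n(t)\|_H^2+\tfrac1\alpha\|\tilde f(t)\|_{V^\ast}^2,
\]
so Gronwall's lemma bounds $\{\rho_n\}$ in $L^\infty(0,T;H)\cap L^2(0,T;V)$ uniformly in $n$. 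Then I extract a subsequence with $\rho_n\rightharpoonup\rho$ in $L^2(0,T;V)$ and $\rho_n\overset{\ast}{\rightharpoonup}\rho$ in $L^\infty(0,T;H)$; testing the Galerkin equation for a fixed $e_k$ against $\psi\in C^\infty_c(0,T)$, the linearity and uniform boundedness of $v\mapsto a(t;v,e_k)$ let me pass to the limit, and density of $\bigcup_n V_n$ in $V$ gives $\tfrac{d}{dt}\rho+A(t)\rho=\tilde f$ in $\cD^\prime(0,T;V^\ast)$. Since the right-hand side lies in $L^2(0,T;V^\ast)$, this identifies $\tfrac{d}{dt}\rho\in L^2(0,T;V^\ast)$ and yields \eqref{lions_pde}.

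From $\rho\in L^2(0,T;V)$ and $\tfrac{d}{dt}\rho\in L^2(0,T;V^\ast)$ the Lions--Magenes interpolation lemma gives $\rho\in C([0,T];H)$ together with the identity $\tfrac{d}{dt}\|\rho(t)\|_H^2=2\langle\tfrac{d}{dt}\rho(t),\rho(t)\rangle$. To see $\rho(0)=\rho_0$, I would test the limiting and Galerkin equations against $\psi(t)v$ with $\psi\in C^\infty([0,T])$, $\psi(T)=0$, $\psi(0)=1$, $v\in\bigcup_n V_n$, integrate by parts in $t$, and compare the boundary terms using $\rho_n(0)\to\rho_0$. For uniqueness, the difference $w$ of two solutions solves the homogeneous equation with $w(0)=0$; testing with $w$ and using Condition~3 gives $\tfrac{d}{dt}\|w(t)\|_H^2\le 2C\|w(t)\|_H^2$, hence $w\equiv0$.

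The delicate point is obtaining a uniform bound on $\tfrac{d}{dt}\rho_n$ \emph{at the Galerkin level}: the naive identity $\tfrac{d}{dt}\rho_n=P_n(\tilde f-A(t)\rho_n)$ requires the $H$-projections $P_n$ onto $V_n$ to be uniformly bounded compatibly with the duality $(V,V^\ast)$, which is not automatic for an arbitrary basis. The standard remedies are either to take $\{e_k\}$ orthogonal both in $H$ and in $V$ (e.g. eigenfunctions of the canonical isomorphism $V\to V^\ast$ seen through $H$), so that each $P_n$ is a contraction in both norms, or simply to recover $\tfrac{d}{dt}\rho$ only in the limit, as above. A completely different route that avoids Galerkin is to apply the Lions (nonsymmetric Lax--Milgram) projection theorem to the space-time form $\cB(\rho,\varphi)=\int_0^T\big[\langle\dot\rho,\varphi\rangle+a(t;\rho,\varphi)\big]\,dt$ on $L^2(0,T;V)$, tested against the dense class of smooth functions vanishing at $t=T$, after the substitution $\rho\mapsto e^{\lambda t}\rho$ which absorbs the $-C\|\cdot\|_H^2$ term into weak coercivity; the regularity statements and the attainment of the initial datum then follow a posteriori.
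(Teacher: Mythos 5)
The paper does not prove Theorem \ref{thm:lions} at all: it is quoted as a classical result of Lions with a pointer to Brezis, and your Faedo--Galerkin argument is exactly the standard proof of that cited theorem. It is correct, including your handling of the two genuinely delicate points --- recovering $\frac{d}{dt}\rho\in L^2(0,T;V^\ast)$ only in the limit from the equation rather than trying to bound the projected derivatives at the Galerkin level, and the added separability hypothesis on $V$, which is satisfied in every application of the theorem in this paper.
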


\subsection{Lions' theorem: Nonlinear case}

For reference to the following theorems, see, e.g. \cite[Thm. 4.10]{barbu_nonlinear} or \cite[Thm. 2.1.2, p. 162]{lions_book}. We again assume that there exists a triple
\begin{displaymath}
  V\subset H\subset V^\ast
\end{displaymath}
and we denote the norms on $V$ and $H$ by $\|\cdot\|$ and $|\cdot|$, respectively. The norm on $V^\ast$ is denoted by $\|\cdot\|_\ast$.
\begin{Theorem}\label{lions_nonlinear}
Let $A\colon V\to V^\ast$ be a demicontinuous, coercive and quasi-monotone operator, i.e.,
\begin{displaymath}
    {_{V^\ast}}(Au-Av,u-v)\geq-\gamma|u-v|^2_H\text{ for all }u,v\in V.
\end{displaymath}
Furthermore, assume that $A$ satisfies the conditions
\begin{align}
    (Ay,y)&\geq \omega\|y\|^p-C_1|y|_H^2\text{ for all }y\in V\label{eq:blablatest}
    \\
    \|Ay\|_\ast&\leq C_2(1+\|y\|^{p-1})\text{ for all }y\in V\notag
\end{align}
where $\omega>0$ and $p>1$. Given $y_0\in H$ and $\frac{1}{p}+\frac{1}{q}=1$, there exists a unique absolutely continuous function $y\colon[0,T]\to V^\ast$ that satisfies
\begin{gather*}
    y\in C([0,T];H)\cap L^p(0,T;V),\ \frac{dy}{dt}\in L^2(0,T;V^\ast)
    \\
    \frac{dy}{dt}(t)+Ay(t)=0\text{ a.e. }t\in(0,T),\ y(0)=y_0,
\end{gather*}
\end{Theorem}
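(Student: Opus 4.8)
The plan is to prove this by the classical Galerkin/monotonicity method for nonlinear evolution equations. Since the only non-monotone feature is the quasi-monotonicity defect, the first step is to absorb it: writing $B:=A+\gamma J$, where $J\colon V\to V^\ast$ is the canonical embedding induced by $V\subset H\subset V^\ast$, the quasi-monotonicity inequality turns into ${}_{V^\ast}(Bu-Bv,u-v)_V\geq0$, so that $B$ is genuinely monotone, still demicontinuous, and satisfies coercivity and growth bounds of the same type (with modified constants). The equation $y'+Ay=0$ is then equivalent to $y'+By=\gamma Jy$, an equation with monotone principal part and a bounded linear lower-order term; I would keep the bookkeeping in terms of $A$ but invoke the monotonicity of $B$ only at the identification step.

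Second, I would set up a Galerkin scheme. Fixing a basis $\{e_i\}$ of the separable space $V$ and letting $V_n=\mathrm{span}\{e_1,\dots,e_n\}$, I seek $y_n(t)=\sum_{i=1}^n c_i^n(t)e_i$ solving the projected system ${}_{V^\ast}(y_n'(t)+Ay_n(t),e_j)_V=0$ for $j=1,\dots,n$, with $y_n(0)$ the $H$-orthogonal projection of $y_0$ onto $V_n$. Demicontinuity of $A$ supplies the regularity needed for Carathéodory/Peano local solvability, and the a priori bound extends solutions to all of $[0,T]$. Testing with $y_n$ gives the energy identity $\tfrac12\frac{d}{dt}|y_n(t)|_H^2+{}_{V^\ast}(Ay_n,y_n)_V=0$, which combined with \eqref{eq:blablatest} and Gronwall's lemma yields $|y_n(t)|_H^2+2\omega\int_0^t\|y_n\|^p\,ds\leq|y_0|_H^2 e^{2C_1 T}$. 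Hence $\{y_n\}$ is bounded in $L^\infty(0,T;H)\cap L^p(0,T;V)$; the growth bound $\|Ay\|_\ast\leq C_2(1+\|y\|^{p-1})$ then bounds $\{Ay_n\}$ in $L^q(0,T;V^\ast)$ (using $(p-1)q=p$), and therefore $\{y_n'\}$ as well, which for the application $p=q=2$ is exactly the claimed $L^2(0,T;V^\ast)$.

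Third, by reflexivity I extract a subsequence with $y_n\rightharpoonup y$ in $L^p(0,T;V)$ and $y_n'\rightharpoonup y'$, $Ay_n\rightharpoonup\chi$ in $L^q(0,T;V^\ast)$; the Aubin--Lions theorem (as already used in the paper) gives $y_n\to y$ strongly in $L^2(0,T;H)$, and the bounds yield $y_n(T)\rightharpoonup y(T)$ in $H$. Passing to the limit gives $y'+\chi=0$, and the real work is the identification $\chi=Ay$ via Minty's trick. The energy identity together with weak lower semicontinuity gives $\limsup_n\int_0^T{}_{V^\ast}(Ay_n,y_n)_V\,dt\leq\tfrac12|y_0|_H^2-\tfrac12|y(T)|_H^2=\int_0^T{}_{V^\ast}(\chi,y)_V\,dt$, where the last equality uses the chain rule $\frac{d}{dt}\tfrac12|y|_H^2={}_{V^\ast}(y',y)_V$ (valid since $y\in L^p(0,T;V)$, $y'\in L^q(0,T;V^\ast)$, Lions--Magenes). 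Feeding this into the monotonicity inequality $\int_0^T{}_{V^\ast}(By_n-Bv,y_n-v)_V\,dt\geq0$ and using $\int|y_n|_H^2\to\int|y|_H^2$, one gets $\int_0^T{}_{V^\ast}(\tilde\chi-Bv,y-v)_V\,dt\geq0$ for every $v\in L^p(0,T;V)$, with $\tilde\chi=\chi+\gamma Jy$; the standard choice $v=y-\lambda w$, division by $\lambda$, and demicontinuity of $B$ as $\lambda\to0^+$ force $\tilde\chi=By$, i.e. $\chi=Ay$.

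Finally, $y\in C([0,T];H)$ and $y(0)=y_0$ follow from the Lions--Magenes continuity lemma and from matching boundary terms in the integration-by-parts identity, while absolute continuity of $y\colon[0,T]\to V^\ast$ is immediate from $y'\in L^q(0,T;V^\ast)$. Uniqueness is the easy part: for two solutions the difference satisfies $\tfrac12\frac{d}{dt}|y_1-y_2|_H^2={}_{V^\ast}(-(Ay_1-Ay_2),y_1-y_2)_V\leq\gamma|y_1-y_2|_H^2$, so Gronwall's lemma with $y_1(0)=y_2(0)$ gives $y_1\equiv y_2$. I expect the limit identification (the combined Minty/energy-$\limsup$ step) to be the main obstacle, since it is the only place where the monotone structure, the several weak convergences, and the delicate boundary term at $t=T$ must all be reconciled simultaneously; the coercivity/growth estimates and the Gronwall arguments are routine.
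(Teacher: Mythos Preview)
The paper does not prove this theorem; it is quoted in the appendix as a classical result with references to the monographs of Barbu and Lions. Your Galerkin/Minty outline is exactly the scheme used in those references, so the overall strategy is correct and there is no ``paper's proof'' to compare against.

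There is, however, one genuine gap. You invoke the Aubin--Lions lemma to obtain $y_n\to y$ strongly in $L^2(0,T;H)$ and then use $\int_0^T|y_n|_H^2\to\int_0^T|y|_H^2$ when passing to the limit in the monotonicity inequality for $B=A+\gamma J$. But Aubin--Lions requires the embedding $V\hookrightarrow H$ to be compact, and this is \emph{not} among the hypotheses of the theorem; in fact it fails in the paper's own setting $V=L^2(\R^d)$, $H=H^{-1}(\R^d)$. Without that strong convergence you cannot control the $\gamma\int_0^T|y_n-v|_H^2$ defect term as written, and the identification step collapses.

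The standard remedy is to absorb the quasi-monotone defect via an exponential weight. Multiplying the Galerkin energy identity by $e^{-2\gamma t}$ gives
\[
\int_0^T e^{-2\gamma t}\,{}_{V^\ast}(By_n,y_n)_V\,dt=\tfrac12|y_n(0)|_H^2-\tfrac12 e^{-2\gamma T}|y_n(T)|_H^2,
\]
whose $\limsup$ is bounded above using only weak lower semicontinuity of $|y_n(T)|_H$. Running Minty's trick for $B$ with the weight $e^{-2\gamma t}$ (or, equivalently, substituting $z(t)=e^{-\gamma t}y(t)$ so that the resulting time-dependent operator $\tilde A(t)z=\gamma Jz+e^{-\gamma t}A(e^{\gamma t}z)$ is genuinely monotone) then identifies $\chi=Ay$ with no compactness needed. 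The rest of your plan---the a priori bounds via \eqref{eq:blablatest} and Gronwall, the growth estimate giving $Ay_n$ bounded in $L^q(0,T;V^\ast)$, continuity via Lions--Magenes, and uniqueness by Gronwall---is correct.
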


We provide an English translation of \cite[Theorem 2.1.2, p. 162]{lions_book} to keep the article as self-contained as possible. For the following theorem, we assume that we have the triple
\begin{displaymath}
  V\subset H\subset V^\prime
\end{displaymath}
where $V$ is a Banach space densely and continuously embedded in the Hilbert space $H$. We identify $H$ with its dual, and let $V^\prime$ be the dual of $V$ (w.r.t. $H$).

\begin{Definition}
  An operator $A\colon V\to V^\prime$ is called monotone if
  \begin{displaymath}
    (A(u)-A(v),u-v)\geq 0\text{ for all }u,v\in V.
  \end{displaymath}
\end{Definition}

\begin{Theorem}
  Let $V,H$ as stated above and $V$ separable. Let $A\colon V\to V^\prime$ with the following properties: For some $1<p<\infty$,
  \begin{itemize}
      \item $A$ is hemicontinuous from $V$ to $V^\prime$, i.e.,
      \begin{displaymath}
        \|A(v)\|_\ast\leq c\|v\|^{p-1}
      \end{displaymath}
      \item $A$ is monotone from $V$ to $V^\prime$
      \item For all $v\in V$, we have
      \begin{equation}\label{eq:lions_1.36}
          (A(v),v)\geq \alpha\|v\|^p,\ \alpha>0
      \end{equation}
  \end{itemize}
  Let $f\in L^{p^\prime}(0,T;V^\prime)$ and $u_0\in H$. Then there exists a unique function $u$ with
  \begin{itemize}
    \item $u\in L^p(0,T;V)$
    \item $u^\prime+A(u)=f$
    \item $u(0)=u_0$.
  \end{itemize}
 It is worth noticing that by exploiting the first and second implication, we have that $u^\prime\in L^{p^\prime}(0,T;V^\prime)$, so the last implication makes sense.

  In applications, the above hypotheses may be too strong. Therefore, it is useful to state the following variant of the theorem: Assume that there exists a seminorm $[v]$ on $V$ such that
  \begin{equation}\label{eq:lions_1.41}
      \text{there exists }\lambda>0\text{ and }\beta>0\text{ such that }[v]+\lambda|v|\geq\beta\|v\|
  \end{equation}
  for all $v\in V$. (Here, $|\cdot|$ and $\|\cdot\|$ denote the norms on $H$ and $V$, respectively.) Also, assume that
  \begin{equation}\label{eq:lions_1.42}
      (A(v),v)\geq\alpha[v]^p.
  \end{equation}
  Then we may replace hypothesis \eqref{eq:lions_1.36} by \eqref{eq:lions_1.41} and \eqref{eq:lions_1.42}, and the conclusions of this theorem stay valid.
\end{Theorem}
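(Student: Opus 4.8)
The plan is to prove this by the classical Galerkin approximation combined with the Minty--Browder monotonicity argument, and then to adapt the argument to the seminorm variant.

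\textbf{Step 1 (Galerkin scheme).} Since $V$ is separable, fix $\{w_j\}_{j\in\N}\subset V$ with dense linear span, put $V_m=\mathrm{span}\{w_1,\dots,w_m\}$, and choose $u_{0m}\in V_m$ with $u_{0m}\to u_0$ in $H$. For each $m$ I would solve the finite-dimensional system
\begin{displaymath}
  (u_m'(t),w_j)+(A(u_m(t)),w_j)=(f(t),w_j),\quad j=1,\dots,m,\qquad u_m(0)=u_{0m},
\end{displaymath}
for $u_m(t)=\sum_{j=1}^m g_{jm}(t)w_j$; a local absolutely continuous solution exists by the Carath\'eodory existence theorem (on the finite-dimensional space $V_m$, $A$ is continuous by hemicontinuity and monotonicity), and the a priori bound of Step 2 extends it to all of $[0,T]$.

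\textbf{Step 2 (a priori estimates).} Multiplying the $j$-th equation by $g_{jm}(t)$ and summing yields the energy identity $\tfrac12\tfrac{d}{dt}|u_m(t)|^2+(A(u_m(t)),u_m(t))=(f(t),u_m(t))$. Using coercivity $(A(v),v)\ge\alpha\|v\|^p$ — or, in the variant, first deriving the $L^\infty(0,T;H)$ bound and then using $(A(v),v)\ge\alpha[v]^p$ together with $[v]+\lambda|v|\ge\beta\|v\|$ — and bounding the right-hand side by $\|f(t)\|_{V'}\|u_m(t)\|_V$ followed by Young's inequality, integration in time and Gronwall give that $\{u_m\}$ is bounded in $L^\infty(0,T;H)\cap L^p(0,T;V)$. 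The growth bound $\|A(v)\|_\ast\le c\|v\|^{p-1}$ then shows $\{A(u_m)\}$ is bounded in $L^{p'}(0,T;V')$.

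\textbf{Step 3 (passage to the limit and identification of the limit operator).} Passing to a subsequence, $u_m\rightharpoonup u$ in $L^p(0,T;V)$ and $A(u_m)\rightharpoonup\chi$ in $L^{p'}(0,T;V')$. Testing the Galerkin equations against $w_j$ times a smooth time cutoff and letting $m\to\infty$ gives $u'=f-\chi$ in the sense of $V'$-valued distributions; since $f-\chi\in L^{p'}(0,T;V')$, this yields the regularity $u'\in L^{p'}(0,T;V')$ asserted in the statement, and the Lions--Magenes lemma gives $u\in C([0,T];H)$ with $u(0)=u_0$ and $\tfrac{d}{dt}|u(t)|^2=2\langle u'(t),u(t)\rangle$. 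The crux is $\chi=A(u)$, obtained by Minty's device: monotonicity gives $\int_0^T(A(u_m)-A(v),u_m-v)\,dt\ge0$ for every $v\in L^p(0,T;V)$; to pass to the limit one needs $\limsup_m\int_0^T(A(u_m),u_m)\,dt\le\int_0^T(\chi,u)\,dt$, which follows from the energy identity $\int_0^T(A(u_m),u_m)\,dt=\int_0^T(f,u_m)\,dt+\tfrac12|u_{0m}|^2-\tfrac12|u_m(T)|^2$, the weak convergence $u_m(T)\rightharpoonup u(T)$ in $H$, lower semicontinuity of $|\cdot|$, and the limiting energy identity for $u$. One then obtains $\int_0^T(\chi-A(v),u-v)\,dt\ge0$ for all $v$; taking $v=u-\lambda w$ with $\lambda\downarrow0$ and using hemicontinuity gives $\int_0^T(\chi-A(u),w)\,dt\ge0$ for all $w\in L^p(0,T;V)$, hence $\chi=A(u)$.

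\textbf{Step 4 (uniqueness).} If $u_1,u_2$ both solve the problem, subtracting the equations and testing with $u_1-u_2$, together with $\tfrac{d}{dt}|u_1-u_2|^2=2\langle u_1'-u_2',u_1-u_2\rangle$ and monotonicity of $A$, gives $|u_1(t)-u_2(t)|^2\le|u_1(0)-u_2(0)|^2=0$. I expect the main obstacle to be Step 3, and within it the justification of $\limsup_m\int_0^T(A(u_m),u_m)\,dt\le\int_0^T(\chi,u)\,dt$ — i.e. controlling the terminal term $|u_m(T)|^2$, which requires deducing $u_m(T)\rightharpoonup u(T)$ weakly in $H$ from the a priori bounds and the limiting equation. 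Passing to the seminorm variant is routine: only the derivation of the $L^p(0,T;V)$ bound changes (it is bootstrapped from the $L^\infty(0,T;H)$ bound via \eqref{eq:lions_1.41}--\eqref{eq:lions_1.42}), and this does not affect the monotonicity argument or the uniqueness proof.
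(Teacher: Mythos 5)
The paper offers no proof of this statement: it is reproduced (in translation) from Lions' book precisely as a classical black-box result, so the only meaningful comparison is with Lions' original argument. Your Galerkin--Minty--Browder sketch is exactly that argument --- including the correct identification of the delicate point, namely controlling $\limsup_m\int_0^T(A(u_m),u_m)\,dt$ via the energy identity and weak lower semicontinuity of $|u_m(T)|$, and the correct observation that the seminorm variant only alters how the $L^p(0,T;V)$ bound is bootstrapped from the $L^\infty(0,T;H)$ bound --- so it is correct and takes essentially the same route.
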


\section*{Acknowledgments}
We take this opportunity to thank Prof. Viorel Barbu whose suggestions and advice have been fundamental for the successful realization of this
article.

\renewcommand{\refname}{References}

\bibliographystyle{amsplain}
\bibliography{bibliography}
  
\end{document}